\documentclass[12pt,reqno]{article}

\usepackage[usenames]{color}
\usepackage{amssymb}
\usepackage{amsmath}
\usepackage{amsthm}
\usepackage{amsfonts}
\usepackage{amscd}
\usepackage{graphicx}
\usepackage{diagbox}

\usepackage[colorlinks=true,
linkcolor=webgreen,
filecolor=webbrown,
citecolor=webgreen]{hyperref}

\definecolor{webgreen}{rgb}{0,.5,0}
\definecolor{webbrown}{rgb}{.6,0,0}

\usepackage{color}
\usepackage{fullpage}
\usepackage{float}

\usepackage{graphics}
\usepackage{latexsym}
\usepackage{epsf}
\usepackage{breakurl}
\DeclareMathOperator{\per}{per}
\def\suchthat{ \ : \ }

\newcommand{\seqnum}[1]{\href{https://oeis.org/#1}{\rm \underline{#1}}}
\def\Enn{\mathbb{N}}
\def\Zee{\mathbb{Z}}

\begin{document}

\theoremstyle{plain}
\newtheorem{theorem}{Theorem}
\newtheorem{corollary}[theorem]{Corollary}
\newtheorem{lemma}[theorem]{Lemma}
\newtheorem{proposition}[theorem]{Proposition}

\theoremstyle{definition}
\newtheorem{definition}[theorem]{Definition}
\newtheorem{example}[theorem]{Example}
\newtheorem{conjecture}[theorem]{Conjecture}

\theoremstyle{remark}
\newtheorem{remark}[theorem]{Remark}

\title{The Narayana Morphism and Related Words}

\author{Jeffrey Shallit\\
School of Computer Science \\
University of Waterloo\\
Waterloo, ON  N2L 3G1 \\
Canada \\
\href{mailto:shallit@uwaterloo.ca}{\tt shallit@uwaterloo.ca}}

\maketitle

\begin{abstract}
The Narayana morphism $\nu$ maps $0 \rightarrow 01$, $1 \rightarrow 2$,
$2 \rightarrow 0$ and has a fixed point 
${\bf n} = n_0 n_1 n_2 \cdots = {\tt 0120010120120}\cdots$.  In this
paper we study the properties of this word and related words using
automata theory.
\end{abstract}

\section{Introduction}

Consider the morphism $\nu: 0 \rightarrow 01$, $1 \rightarrow 2$,
$2 \rightarrow 0$,
which we call the {\it Narayana morphism} for reasons that will become
clear in a moment.  Its fixed point is
$${\bf n} = n_0 n_1 n_2 \cdots = {\tt 012001012012001200101200101201200101201200120010120120012001} \cdots,$$
the infinite {\it Narayana word}.  Up to renaming the letters, the
word $\bf n$ is
sequence \seqnum{A105083} in the
{\it On-Line Encyclopedia of Integer
Sequences} (OEIS) \cite{Sloane:2025}.

We can view $\nu$ as a higher-dimensional
generalization of the well-studied Fibonacci morphism 
\cite{Karhumaki:1983,Berstel:1986b,Fujita&Machida:1986}
$0 \rightarrow 01$, $1 \rightarrow 0$, and
$\bf n$ as a higher-dimensional
generalization of the Fibonacci word ${\bf f} = 
{\tt 01001010010010100101001} \cdots$ 
\cite{Recht&Rosenman:1947,Knuth&Morris&Pratt:1977}.  (See
Section~\ref{finalw} for more about this.)
Nevertheless, the morphism $\nu$ and the word $\bf n$
have received very little study.  The morphism $\nu$ was mentioned
briefly by Sirvent and Wang \cite{Sirvent&Wang:2002}, and
$\bf n$ appears (implicitly) in
\cite{Frougny&Masakova&Pelantova:2004,Ambroz&Masakova&Pelantova&Frougny:2006}
as an infinite word associated with a Parry number (with finite
R\'enyi expansion) that is a real root of the equation
$X^j = \sum_{0 \leq i < j} t_i X^i$.  In their notation, it corresponds to
$t_1 t_2 t_3 = 101$.  

In this paper we study properties of the word $\bf n$, and related
words, using
{\tt Walnut}, a free software tool for proving first-order properties
of sequences that can be generated with automata.  Although some of the
results proved here were already known (and indeed, some in much greater
generality), the virtue of the method we present here is its extreme
simplicity.  Often one only has to state the desired theorem and
{\tt Walnut} can prove it in a few seconds.  

The techniques involving automata are mostly familiar now,
and detailed explanations of the technique can be found in the author's book
\cite{Shallit:2023}.  However, there are a few novelties.  Along the way
we prove two conjectures, one due to Kimberling and Moses \cite{Kimberling&Moses:2010} and one due to Cloitre.

\section{The Narayana numbers and the Narayana numeration system}

It is natural to study the integer sequence
$(N_i)_{i \geq 0}$ defined by $N_i = | \nu^i (0) |$.  Table~\ref{nar}
gives the first few values of this sequence.
If we also define $N_{-1} = N_{-2} = 1$, then
a simple induction shows $N_i = N_{i-1} + N_{i-3}$ for $i \geq 1$,
and further that $|\nu^i (1)| = N_{i-2}$ and
$|\nu^i (2)| = N_{i-1}$
for $i \geq 0$.
\begin{table}[htb]
\begin{center}
\begin{tabular}{c|cccccccccccccccccccccccccccc}
$i$& 0&   1&   2&   3&   4&   5&   6&   7&   8&   9&  10&  11&  12&  13&  14&  15&  16  \\
\hline
$N_i$ &   1&   2&   3&   4&   6&   9&  13&  19&  28&  41&  60&  88& 129& 189& 277& 406& 595& 
\end{tabular}
\end{center}
\caption{The first few Narayana numbers.}
\label{nar}
\end{table}
The integer sequence $(N_i)$ (in contrast to $\nu$ and $\bf n$) has been 
well-studied under the slightly awkward name of {\it Narayana's cows sequence}.
See, for example, \cite{Allouche&Johnson:1996,Lin:2021,Crilly:2024}.
We call $(N_i)$ the {\it Narayana numbers;} they form sequence
\seqnum{A000930} in the OEIS.  The Narayana numbers can be extended
to negative indices using the recurrence $N_i = N_{i-1} + N_{i-3}$.

We can build a numeration system from $(N_i)_{i \geq 0}$, where we write every
non-negative integer as a sum of distinct Narayana numbers $N_i$.
We can write such a representation for $m$ as a binary string
$e_1 \cdots e_t$, where 
\begin{equation}
m =  \sum_{1 \leq i \leq t} e_{i} N_{t-i}.
\label{sum}
\end{equation}
The sum \eqref{sum} is denoted $[e_1 e_2 \cdots e_t]_N$.
In other words, the
representations we study here are written most-significant-digit first.
For example, $27 = N_7 + N_4 + N_1 = 19 + 6 + 2 = [10010010]_N$.

\begin{theorem}
Every positive integer $m$ has a unique representation $e_1 \cdots e_t$,
provided the leading digit $e_1$ is nonzero,
and there are no occurrences of the block $11$ or $101$ in $e_1 \cdots e_{t}$.
\end{theorem}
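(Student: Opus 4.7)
The plan is to handle existence and uniqueness separately, via the techniques standard for Zeckendorf-style numeration systems associated with linear recurrences. The existence argument is the greedy algorithm: given $m \geq 1$, I would let $t$ be the largest index with $N_t \leq m$, emit a $1$ in the position corresponding to $N_t$, and recurse on $m - N_t$. The key inequality is $m < N_{t+1} = N_t + N_{t-2}$, which yields $m - N_t < N_{t-2}$. Hence if $s$ is the next index selected by greedy, then $N_s \leq m - N_t < N_{t-2}$, forcing $s \leq t-3$. Consecutive $1$'s in the output are therefore separated by at least two $0$'s, which is exactly enough to rule out both $11$ (gap $1$) and $101$ (gap $2$).

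For uniqueness, I would first prove by induction on $t$ the sharp bound
\[
  M_t \;:=\; \max \{ [e_1 \cdots e_t]_N \suchthat e_1 \cdots e_t \text{ has no } 11 \text{ and no } 101 \} \;=\; N_t - 1.
\]
The small base cases $t \leq 3$ are immediate. For $t \geq 3$ a maximizing valid string may be taken to begin with the block $100$, since any valid string starting with a $1$ must continue with $00$, and prefixing by a $0$ can only decrease (and certainly never increase) the value; thus $M_t = N_{t-1} + M_{t-3} = N_{t-1} + (N_{t-3} - 1) = N_t - 1$ by the Narayana recurrence $N_t = N_{t-1} + N_{t-3}$.

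Uniqueness then follows quickly. If $u \neq v$ are two valid representations of the same positive integer $m$ without leading zeros, their lengths must coincide: the longer one, beginning with $1$, has value at least $N_{L-1}$, exceeding the maximum $N_{L-1} - 1$ achievable by any shorter valid string. At the first position $k$ where they disagree, we may assume $u_k = 1$ and $v_k = 0$; then the suffix of $u$ from position $k$ contributes at least $N_{L-k}$, while the corresponding suffix of $v$ contributes $[v_{k+1} \cdots v_L]_N \leq M_{L-k} = N_{L-k} - 1$, a contradiction. The only real obstacle is keeping the most-significant-digit-first convention straight and verifying that the rewrite $N_{i-1} + N_{i-3} \to N_i$ built into the recurrence is precisely what the forbidden set $\{11, 101\}$ prohibits; beyond that, no deeper idea is needed.
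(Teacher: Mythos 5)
Your proof is correct and complete, but it is worth noting that the paper does not actually prove this theorem at all: it simply cites Theorem 7.1 of Hoggatt and Bicknell-Johnson and the general numeration-system results of Fraenkel. What you have written is essentially the standard Zeckendorf-style argument that those references carry out (in Fraenkel's case, for a general class of recurrences), specialized to $N_i = N_{i-1}+N_{i-3}$: the greedy step $m - N_t < N_{t+1}-N_t = N_{t-2}$ forces an index gap of at least $3$, which is exactly the complement of the forbidden set $\{11,101\}$, and the maximality lemma $M_t = N_t - 1$, proved via the decomposition of a maximal valid string as $100\,w$, gives uniqueness by comparing lengths and then the first disagreeing digit. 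All the steps check out, including the boundary verification that $100\,w$ is valid iff $w$ is, and the edge cases $t \leq 2$ of the greedy recursion where $m - N_t$ is forced to be $0$. So your write-up is a legitimate self-contained replacement for the citation; what the paper's approach buys is brevity and the observation that the result is a special case of a known general theorem, while yours buys a reader who wants to see why the specific forbidden blocks $11$ and $101$ are precisely the right ones for this cubic recurrence.
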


\begin{proof}
This result can be found as Theorem 7.1 of
\cite{Hoggatt&Bicknell-Johnson:1982}, but
it also follows from the more general
results of Fraenkel \cite{Fraenkel:1985}.
\end{proof}

The unique (canonical) Narayana representation of $m$ is written
$(m)_N = e_1 e_2 \cdots e_t$.   The representation of $0$ is $\epsilon$,
the empty string.
The first few Narayana representations are given in Table~\ref{tab2}.
\begin{table}[htb]
\begin{center}
\begin{tabular}{c|c||c|c||c|c}
$i$ & $(i)_N$ & $i$ & $(i)_N$ & $i$ & $(i)_N$ \\
\hline
0&       $\epsilon$&       7&   10001&      14& 1000001 \\
      1&       1&       8&   10010&      15& 1000010 \\
      2&      10&       9&  100000&      16& 1000100 \\
      3&     100&      10&  100001&      17& 1001000 \\
      4&    1000&      11&  100010&      18& 1001001 \\
       5&    1001&      12&  100100&      19&10000000\\
       6&   10000&      13& 1000000&      20&10000001
\end{tabular}
\end{center}
\caption{Narayana representations.}
\label{tab2}
\end{table}

Define $p_i$ to be the prefix of length $i$ of $\bf n$
and let $q_i$ be $p_{N_i} = \nu^i (0)$.
The Narayana representation of $i$ also gives a simple description
for $p_i$, as follows.
\begin{theorem}
Write the Narayana representation of $i$ in the form
$N_{d_1} + N_{d_2} + \cdots + N_{d_t}$, where
$d_1 > d_2 > \cdots > d_t$.
Then $p_i = q_{d_1} q_{d_2} \cdots q_{d_t}$.
\end{theorem}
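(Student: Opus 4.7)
The plan is to induct on $t$, the number of terms in the Narayana representation of $i$. The base case $t = 0$ corresponds to $i = 0$, for which $p_0 = \epsilon$ is the empty product. For the inductive step, set $i' := i - N_{d_1} = N_{d_2} + \cdots + N_{d_t}$; its canonical representation has $t - 1$ terms, so by induction $p_{i'} = q_{d_2} \cdots q_{d_t}$. It then suffices to prove the factorization $p_i = q_{d_1} \cdot p_{i'}$. The first factor is the length-$N_{d_1}$ prefix of $\mathbf{n}$, and since $\mathbf{n} = \nu^{d_1}(\mathbf{n})$ starts with $\nu^{d_1}(n_0) = \nu^{d_1}(0) = q_{d_1}$, this part is automatic.

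The heart of the argument is a \emph{shift-invariance} lemma: for $k \ge 2$,
\[
n_{N_k + j} = n_j \qquad (0 \le j < N_{k-2}).
\]
To prove it, first observe the recurrence $q_k = q_{k-1}\, q_{k-3}$ for $k \ge 3$. Indeed, $q_k = \nu^{k-1}(\nu(0)) = \nu^{k-1}(0)\,\nu^{k-1}(1) = q_{k-1}\,\nu^{k-1}(1)$, and using $\nu(1) = 2$, $\nu(2) = 0$ we get $\nu^{k-1}(1) = \nu^{k-3}(0) = q_{k-3}$. Iterating this yields $q_{k+1} = q_k\, q_{k-2}$, so $\mathbf{n}$ (which begins with $q_{k+1}$) coincides with $q_{k-2}$ on positions $N_k, N_k + 1, \ldots, N_k + N_{k-2} - 1$; since $q_{k-2}$ is itself the length-$N_{k-2}$ prefix of $\mathbf{n}$, those same positions agree with $n_0, n_1, \ldots, n_{N_{k-2} - 1}$, proving the claim.

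To close the induction, one checks that $i' < N_{d_1 - 2}$, so that shift invariance with $k = d_1$ applies throughout $0 \le j < i'$. This follows from a brief side lemma, itself proved by induction on $\ell$: if $(m)_N$ has length at most $\ell$, then $m < N_\ell$. Because the canonical representation of $i$ begins $1\,0\,0\,e_4 \cdots e_t$ (the two digits immediately after the leading $1$ must be $0$, lest the forbidden blocks $11$ or $101$ appear), the representation of $i'$ has length at most $t - 3 = d_1 - 2$, yielding the bound. The main technical obstacle is isolating and proving the shift-invariance lemma; once that is in hand, the induction on $t$ is routine.
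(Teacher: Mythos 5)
Your proof is correct, and it is precisely the induction that the paper invokes but omits ("A simple proof by induction, omitted"): the key content you supply — the recurrence $q_{k+1}=q_k q_{k-2}$, the resulting shift-invariance $n_{N_k+j}=n_j$ for $j<N_{k-2}$, and the bound $i-N_{d_1}<N_{d_1-2}$ forced by the forbidden blocks $11$ and $101$ — is exactly what is needed to make that induction go through. The only blemishes are cosmetic: you reuse $t$ both for the number of summands and for the length of the digit string, and the trivial cases $d_1\le 1$ (where $i'=0$) deserve a passing mention.
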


\begin{proof}
A simple proof by induction, omitted.
\end{proof}

Recall that $|x|_a$ denotes the number of occurrences of the symbol
$a$ in the string $x$.  
\begin{proposition}
For $i \geq 0$ we have
\begin{align*}
|q_i|_0 &= N_{i-2}\\
|q_i|_1 &= N_{i-3}\\
|q_i|_2 &= N_{i-4}.
\end{align*}
\end{proposition}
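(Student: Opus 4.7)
The plan is to prove all three identities simultaneously by induction on $i$, by first establishing a self-similar decomposition of $q_{i+1}$ that mirrors the Narayana recurrence $N_j = N_{j-1} + N_{j-3}$. The only real work is setting up the right decomposition; once that is in place the three statements fall out by matching initial conditions.

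The key observation I would use is that, by iterating the morphism, for $i \ge 2$ we have
\[
\nu^i(1) \;=\; \nu^{i-1}(\nu(1)) \;=\; \nu^{i-1}(2) \;=\; \nu^{i-2}(\nu(2)) \;=\; \nu^{i-2}(0) \;=\; q_{i-2}.
\]
Since $\nu(0) = 01$, applying $\nu^i$ to both sides gives
\[
q_{i+1} \;=\; \nu^i(0)\,\nu^i(1) \;=\; q_i\, q_{i-2}
\qquad (i \ge 2).
\]
Taking the count of any letter $a \in \{0,1,2\}$ then yields
\[
|q_{i+1}|_a \;=\; |q_i|_a \;+\; |q_{i-2}|_a,
\]
which is precisely the Narayana recurrence (applied to the $a$-counts rather than to the lengths).

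Using the extension of the Narayana numbers to negative indices via the recurrence (so that $N_{-1} = N_{-2} = 1$ and $N_{-3} = N_{-4} = 0$), I would then check the three base cases $i = 0, 1, 2$ by inspection: $q_0 = 0$, $q_1 = 01$, $q_2 = 012$. Each gives the correct match to $N_{i-2}$, $N_{i-3}$, $N_{i-4}$ for the counts of $0$, $1$, $2$ respectively. Induction, together with the shared recurrence, completes the proof.

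The main obstacle — which is genuinely minor — is simply verifying that the extended Narayana values for negative indices behave correctly in the base cases, and being careful that the decomposition $q_{i+1} = q_i q_{i-2}$ is only applied for $i \ge 2$. As a consistency check, summing the three claimed identities recovers $N_{i-2} + N_{i-3} + N_{i-4} = N_{i-1} + N_{i-3} = N_i$, matching the already-established formula $|q_i| = N_i$.
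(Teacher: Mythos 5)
Your proof is correct and takes exactly the route the paper intends: the paper states only ``a simple proof by induction, omitted,'' and your argument supplies that induction, with the decomposition $q_{i+1}=q_i q_{i-2}$ correctly reducing the letter counts to the Narayana recurrence and the base cases $i=0,1,2$ matching the extended values $N_{-3}=N_{-4}=0$, $N_{-2}=N_{-1}=1$. Nothing further is needed.
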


\begin{proof}
A simple proof by induction, omitted.
\end{proof}

\begin{corollary}
Let the representation of $i$ be $e_1 e_2 \cdots e_j$.
Then
\begin{align*}
|p_i|_0 &= [e_1 \cdots e_{j-2}]_N + e_{j-1} + e_j \\
|p_i|_1 &= [e_1 \cdots e_{j-3}]_N + e_{j-2} + e_{j-1} \\
|p_i|_2 &= [e_1 \cdots e_{j-4}]_N + e_{j-3} + e_{j-2} .
\end{align*}
\end{corollary}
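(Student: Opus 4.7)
The plan is to combine the decomposition from the previous theorem with the letter-count formulas in the preceding proposition, and then repackage the resulting sums into the advertised form. Because counting letters is additive on concatenations, the main work will be bookkeeping with indices together with extending $(N_i)$ to small negative indices.

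Writing $(i)_N = e_1 e_2 \cdots e_j$, we have $i = \sum_{k=1}^j e_k N_{j-k}$, and by the theorem $p_i$ is the concatenation of the blocks $q_{j-k}$ over those $k$ with $e_k = 1$. Applying the proposition to each block then gives, uniformly for $a \in \{0,1,2\}$,
\[
|p_i|_a \;=\; \sum_{k=1}^j e_k\, N_{j-k-(2+a)},
\]
using $|q_m|_0 = N_{m-2}$, $|q_m|_1 = N_{m-3}$, $|q_m|_2 = N_{m-4}$.

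I would then split this sum at $k = j-(2+a)$. The terms with $k \leq j-(2+a)$ assemble exactly into $[e_1 \cdots e_{j-(2+a)}]_N$, by the very definition of the numeration value. What remains is a short tail of terms $e_k N_r$ with $r \in \{-1,-2,-3,-4\}$. Using the recurrence $N_i = N_{i-1}+N_{i-3}$ together with the seeds $N_{-1}=N_{-2}=1$, I would record the small values $N_{-1}=N_{-2}=1$ and $N_{-3}=N_{-4}=0$; substituting these collapses each tail to the asserted linear combination, namely $e_{j-1}+e_j$, $e_{j-2}+e_{j-1}$, and $e_{j-3}+e_{j-2}$ respectively.

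The only obstacle is notational: one must handle the degenerate cases when $j$ is so small that the ``prefix'' $e_1 \cdots e_{j-(2+a)}$, or even some of the boundary digits $e_{j-3}, e_{j-2}$, fail to literally exist. I would adopt the conventions $e_k := 0$ for $k \leq 0$ and $[\,]_N := 0$ for the empty string, after which all three identities hold uniformly; the very small cases can in any event be checked directly against Table~\ref{tab2}.
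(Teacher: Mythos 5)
Your derivation is correct: the paper offers no explicit proof of this corollary, presenting it as an immediate consequence of the preceding theorem ($p_i = q_{d_1}\cdots q_{d_t}$) and the proposition on $|q_i|_a$, which is exactly the combination you carry out. Your index bookkeeping checks out, including the values $N_{-1}=N_{-2}=1$, $N_{-3}=N_{-4}=0$ needed to collapse the boundary terms, so this matches the intended argument.
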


\section{Closed form and estimates for the Narayana numbers}

As is well-known from the theory of linear recurrences, there is a closed
form for the Narayana numbers of the form
\begin{equation}
N_i = c_1 \alpha^i + c_2 \beta^i + c_3 \gamma^i,
\label{binet}
\end{equation}
where $\alpha$ and $c_1$ are real, and $(\beta,\gamma)$
and $(c_2, c_3)$ are complex conjugates pairs.
Here $\alpha,\beta,\gamma$ are the zeroes of the polynomial $X^3-X^2-1$.
We therefore get, using the triangle inequality and \eqref{binet}, that
\begin{equation}
|N_i - c_1 \alpha^i| \leq  |c_2 \beta^i + c_3 \gamma^i |  
 \leq |c_2 \beta^i| + |c_3 \gamma^i| 
 = |c_2| |\beta|^i + |c_3| |\gamma|^i
 = 2|c_2| |\beta|^i .
\label{eq1}
\end{equation}

We now obtain some explicit estimates for the quantities
$N_{i+j} - \alpha^j N_i$.  These will be useful later in 
Section~\ref{kimosec}, Theorem~\ref{kimo}.

For $k \geq 0$ we have
$$ N_{i+k} - \alpha^k N_i = 
(N_{i+k} - c_1 \alpha^{i+k}) - \alpha^k (N_i - c_1 \alpha^i) ,$$
and so by the triangle inequality and \eqref{eq1} we get
\begin{align}
|N_{i+k} - \alpha^k N_i| &\leq |N_{i+k} - c_1 \alpha^{i+k}| + \alpha^k |N_i - c_1 \alpha^i| \nonumber \\
&\leq 2 |c_2| (|\beta|^k + \alpha^k ) |\beta|^i.
\label{nik}
\end{align}

Closed forms for $\alpha,\beta,\gamma, c_1, c_2, c_3$ were given by
Lin \cite{Lin:2021}, and from these we can easily compute approximations
to their values, as follows:
\begin{align*}
\alpha &\doteq 1.46557123187676802665673 \\
\beta &\doteq -0.2327856159383840133283656 + 0.7925519925154478483258983i \\
\gamma &\doteq -0.2327856159383840133283656 - 0.7925519925154478483258983i\\
|\beta| &= |\gamma| \doteq 0.826031357654186955968987\\
c_1 &= \alpha^5/(\alpha^3+2) \doteq 1.313423059852349798783263 \\
c_2 &= \beta^5/(\beta^3+2) \doteq -0.15671152992617489939163 - 0.001340333618411808095189 i \\
c_3 &= \gamma^5/(\beta^3+2) \doteq -0.15671152992617489939163 + 0.001340333618411808095189 i \\
|c_2| &= |c_3| \doteq 0.15671726167213060374568596.
\end{align*}

Now for $k = 1,2,3$, using \eqref{nik} and the numerical approximations above,
we get
\begin{align}
|N_{i+1} - \alpha N_i | & < 0.71826736534411 (0.8260313576542)^i  \label{n1}\\
|N_{i+2} - \alpha^2 N_i | &< 0.887090800406 (0.8260313576542)^i \label{n2}\\
|N_{i+3} - \alpha^3 N_i | &< 1.16331950440432 (0.8260313576542)^i . \label{n3}
\end{align}

We now prove a useful lemma.
\begin{lemma}
For $i \geq 0$ we have the bounds
\begin{align}
-0.79752082381 &< [(i)_N 0]_N - \alpha i  < 1.04861494527  \label{bnd0} \\
-1.09505816541 &< [(i)_N 00]_N - \alpha^2 i < 1.684304571609  \label{bnd00} \\
-1.10019497962 &< [(i)_N 000]_N - \alpha^3 i < 1.70593793584  \label{bnd000} .
\end{align}
\end{lemma}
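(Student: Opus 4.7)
The plan is to reduce each of the three inequalities to the estimates \eqref{n1}, \eqref{n2}, \eqref{n3} by writing $[(i)_N 0^k]_N - \alpha^k i$ as a sum over the $1$-digits in the Narayana representation of $i$, and then extracting one-sided bounds by a finite combinatorial optimization plus a geometric tail.

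First I would use the canonical Narayana expansion: write $i = N_{d_1} + N_{d_2} + \cdots + N_{d_t}$ with $d_1 > d_2 > \cdots > d_t \geq 0$. The admissibility condition (no block $11$ or $101$) translates into the gap condition $d_u - d_{u+1} \geq 3$ for all $u$. Appending $k$ zeros to $(i)_N$ simply shifts each index by $k$, so $[(i)_N 0^k]_N = \sum_{u=1}^t N_{d_u+k}$ and hence
\begin{equation*}
[(i)_N 0^k]_N - \alpha^k i \;=\; \sum_{u=1}^t \bigl(N_{d_u+k} - \alpha^k N_{d_u}\bigr).
\end{equation*}
Denote the summand by $T_k(d) := N_{d+k} - \alpha^k N_d$. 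By \eqref{n1}, \eqref{n2}, \eqref{n3}, $|T_k(d)| < C_k |\beta|^d$ where $C_1, C_2, C_3$ are the explicit constants appearing there.

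Next I would bound the sum $\sum_u T_k(d_u)$ from above and below. To get \emph{one-sided} bounds as sharp as those claimed, I would split the sum at some threshold $J$: the ``head'' $d_u < J$ is treated by exact combinatorial optimization, and the ``tail'' $d_u \geq J$ is controlled by the geometric estimate $\sum_{d_u \geq J,\ \text{gap} \geq 3} C_k |\beta|^{d_u} \leq C_k |\beta|^J/(1 - |\beta|^3)$. For the head, because the index set $\{0, 1, \ldots, J-1\}$ is finite and the gap-$\geq 3$ constraint makes admissible subsets into a finite combinatorial family, I can enumerate them (or do a short dynamic programming pass indexed by ``last chosen position'') to find the admissible subset $S \subseteq \{0, \ldots, J-1\}$ that maximizes (resp.\ minimizes) $\sum_{d \in S} T_k(d)$. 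Choosing $J$ large enough (say $J = 15$ or so, where $|\beta|^{15} \approx 0.05$) forces the tail contribution well below the required decimal precision, and the extrema found on the head, widened by the tail, yield exactly the numerical bounds stated.

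The main obstacle is that the naive estimate $\bigl|\sum_u T_k(d_u)\bigr| \leq \sum_u C_k |\beta|^{d_u} \leq C_k/(1-|\beta|^3)$ is both \emph{symmetric} and too loose (roughly $\pm 1.65$ for $k=1$, versus the asymmetric window $(-0.798,\, 1.049)$ required). The asymmetry must therefore come from the alternating signs of $T_k(d)$ as $d$ varies, since $N_d$ oscillates around its dominant term $c_1 \alpha^d$ with complex-conjugate corrections $c_2 \beta^d + c_3 \gamma^d$. So the crux of the proof is recognising that the extremum is attained by a \emph{specific finite} admissible digit pattern (the one that picks up consistently signed $T_k(d)$-values among the small indices), and bookkeeping the geometric tail carefully enough that the combined bound matches the numerical constants in the statement.
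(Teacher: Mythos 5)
Your proposal is correct and follows essentially the same route as the paper: decompose $[(i)_N 0^k]_N - \alpha^k i$ as $\sum_u (N_{d_u+k}-\alpha^k N_{d_u})$, bound each term via \eqref{n1}--\eqref{n3}, and split into a finitely-enumerated head plus a geometric tail. The only quantitative caveat is that your suggested cutoff $J=15$ leaves a tail of roughly $0.09$, too large to recover the stated decimal constants; the paper uses $J=30$ (enumerating all $i < N_{31}=125491$), where the tail drops below $0.0134$.
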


\begin{proof}
To prove Eq.~\eqref{bnd0}, we start by
writing $i$ in its Narayana representation:
$i = N_{f_1} + \cdots + N_{f_s}$ for some integers
$0 \leq f_1 < f_2 < \cdots < f_s$.
Then
$$[(i)_N 0]_N = N_{f_1+1} + \cdots + N_{f_s + 1},$$
and hence
$$[(i)_N 0]_N - \alpha i 
= \sum_{1 \leq j \leq s} (N_{f_j + 1} - \alpha N_{f_j}).$$
Now we split the sum on the right-hand side 
into two pieces: one where $f_j < 30$, and one
where $f_j \geq  30$:
$$
 [(i)_N 0]_N - \alpha i  = 
\sum_{0 \leq f_j < 30} (N_{f_j + 1} - \alpha N_{f_j}) 
+
\sum_{f_j \geq 30} (N_{f_j + 1} - \alpha N_{f_j}) ,$$
and hence, by using Eq.~\eqref{n1}, we get
\begin{align}
| [(i)_N 0]_N - \alpha i  - 
\sum_{0 \leq f_j < 30} (N_{f_j + 1} - \alpha N_{f_j}) | &\leq 
\sum_{k \geq 30} |N_{k + 1} - \alpha N_{k}| \nonumber \\
& < \sum_{k \geq 30} 0.71826736534411 (0.8260313576542)^k \nonumber \\
& < 0.01335706955 . \label{a11}
\end{align}
We now compute bounds on
$ \sum_{0 \leq f_j < 30} (N_{f_j + 1} - \alpha N_{f_j}) $
by direct computation.  If we examine all $k$ in the
range $0 \leq k < N_{31} = 125491$, then we see the last
$30$ bits of all possible Narayana expansions.  By direct
calculation we get
$$-0.7841637542588 < \sum_{0 \leq f_j < 30} (N_{f_j + 1} - \alpha N_{f_j})
< 1.035257875716.$$
Putting this together with \eqref{a11} we get
\begin{equation}
-0.79752082381 < [(i)_N 0]_N - \alpha i  < 1.04861494527, \label{x795}
\end{equation}
as desired.

Eqs.~\eqref{n2} and \eqref{n3} can be proved in exactly the same way.
\end{proof}

\begin{remark}
We can easily improve the upper and lower bounds by doing  more
computation, replacing the $30$ with a large number.

Similar, but weaker numerical
results were previously obtained by Dilcher \cite{Dilcher:1993}
and Letouzey \cite{Letouzey:2025}
in, however, more generality than what we obtained here.
\end{remark}

\section{Finite automata and {\tt Walnut}}

For more about the theory of finite automata, see 
\cite{Hopcroft&Ullman:1979}, for example.

We consider sequences $(f_i)_{i \geq 0}$ defined by finite automata that
take, as inputs, the Narayana representation of one or more integers.
There are three different ways to do this, as follows:
\begin{itemize}
\item The simplest kind of definition solves the membership problem
for the sequence.  Here the automaton takes the single number $j$ as input
(in Narayana representation), and accepts if and only if there is an $i$
such that $f(i) = j$.  This is, of course, not so useful if $f(j)$ 
grows slower than $j$.

\item If the sequence $f$ takes its values in a finite alphabet, then
it can be represented as a DFAO (deterministic finite
automaton with output) that on input $i$ leads to a state with
$f(i)$ as the output.
For example, the Narayana word ${\bf n} = n_0 n_1 n_2 \cdots$
at position $i \geq 0$
takes the value $1$ if $(i)_N$ ends in $1$,
the value $2$ if $(i)_N$ ends in $10$, and $0$ otherwise.  This gives
the automaton in Figure~\ref{naa} computing $n_i$.
\begin{figure}[htb] 
\begin{center}
\includegraphics[width=4in]{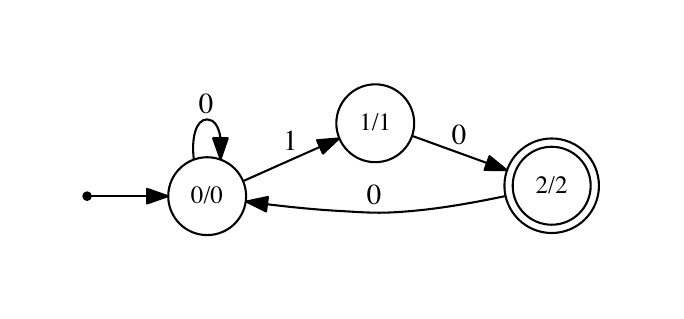}
\end{center}
\caption{Narayana automaton computing $n_i$.}
\label{naa}
\end{figure}

\item A more complicated but more useful definition takes $i$ and $j$ as
input, in parallel, and accepts if and only if $f(i) = j$.
In this case we may have to pad the shorter of the two inputs 
with leading zeroes so that both $i$ and $j$ Narayana representations
are of the same length.
Note that with this
type of definition we can easily compute both of the previous
two representations of $f$, but not necessarily vice versa.
\end{itemize}
As an example, consider the automaton
depicted in Figure~\ref{incr}; it accepts the representations of
$i$ and $j$ in parallel if and only if $j=i+1$; it is an
{\it incrementer}.
\begin{figure}[htb]
\begin{center}
\includegraphics[width=5in]{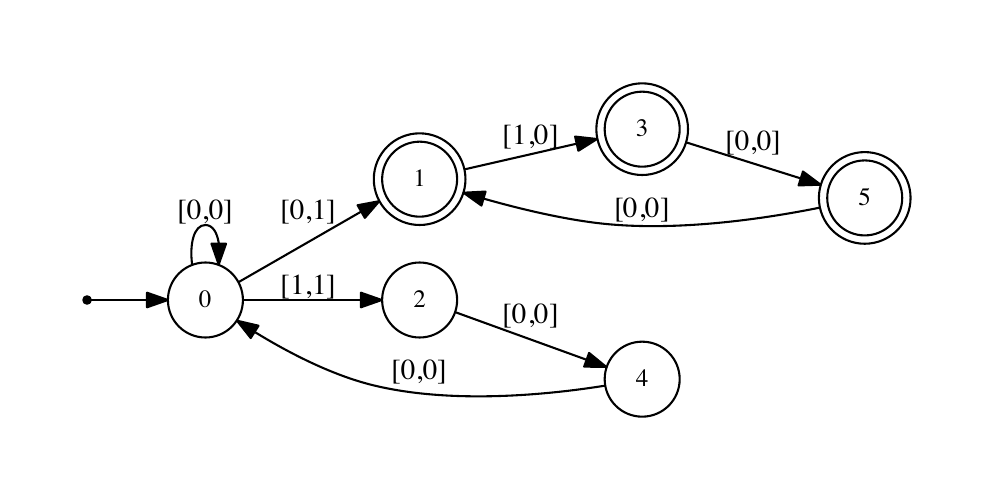}
\end{center}
\caption{Incrementer automaton for Narayana representations.}
\label{incr}
\end{figure}
The correctness of this automaton follows from the following
summation identity for the Narayana numbers, which is
easily verifiable by induction:
\begin{lemma}
Let $i \in \{1,2,3\}$ and $h \geq 0$.  Then
$$ N_i + N_{i+3} + \cdots + N_{i+3h} = N_{i+3h+1} - 1.$$
\end{lemma}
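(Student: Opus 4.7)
The plan is to proceed by induction on $h$, keeping the starting index $i \in \{1,2,3\}$ as a parameter that only needs to be pinned down at the base case.

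For the base case $h = 0$, the asserted identity collapses to $N_i = N_{i+1} - 1$, which I would verify by direct substitution using Table~\ref{nar} together with the initial data $N_{-1} = N_{-2} = 1$ and the recurrence $N_i = N_{i-1} + N_{i-3}$. This is a finite check covering only the three admissible values of $i$.

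For the inductive step, assuming the identity holds for some fixed $h \geq 0$, I would split off the last term and compute
\[
\sum_{k=0}^{h+1} N_{i+3k} \;=\; \Bigl(\sum_{k=0}^{h} N_{i+3k}\Bigr) + N_{i+3h+3}
\;=\; \bigl(N_{i+3h+1} - 1\bigr) + N_{i+3h+3}.
\]
Applying the recurrence $N_j = N_{j-1} + N_{j-3}$ at index $j = i + 3h + 4$ yields $N_{i+3h+4} = N_{i+3h+3} + N_{i+3h+1}$, so the right-hand side above collapses to $N_{i+3(h+1)+1} - 1$, closing the induction.

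There is essentially no obstacle here. The entire argument rests on the observation that the Narayana recurrence, rewritten as $N_{j+3} - N_j = N_{j+2}$, says precisely that appending one more term $N_{i+3(h+1)}$ to the left-hand partial sum matches the increment $N_{i+3h+4} - N_{i+3h+1}$ on the right-hand side. The additive constant $-1$ is set at the base case and is simply carried along; the restriction $i \in \{1,2,3\}$ enters only through the base case and plays no role in the inductive step.
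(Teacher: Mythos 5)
Your inductive step is correct: applying the recurrence $N_j = N_{j-1} + N_{j-3}$ at $j = i+3h+4$ does exactly close the induction. The problem is the base case, which you describe as a finite check by direct substitution but never actually carry out. If you do carry it out, it fails for $i=3$: the claim $N_i = N_{i+1}-1$ reads $N_3 = 4$ on the left but $N_4 - 1 = 6 - 1 = 5$ on the right. The discrepancy persists for all $h$ (e.g.\ $N_3 + N_6 = 4 + 13 = 17$ while $N_7 - 1 = 18$), as it must, since your inductive step faithfully propagates whatever constant the base case produces. So the lemma as stated is false for $i=3$, and no proof can succeed without amending the statement.

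The underlying identity is a telescope: rewriting the recurrence as $N_{i+3k} = N_{i+3k+1} - N_{i+3(k-1)+1}$ and summing over $0 \leq k \leq h$ gives $N_i + N_{i+3} + \cdots + N_{i+3h} = N_{i+3h+1} - N_{i-2}$. The constant $N_{i-2}$ equals $1$ precisely for $i \in \{0,1,2\}$ (since $N_{-2} = N_{-1} = N_0 = 1$), but equals $N_1 = 2$ for $i=3$, so the admissible range should be $\{0,1,2\}$ (or the right-hand side should read $N_{i+3h+1} - N_{i-2}$). The paper offers no argument beyond asserting the identity is ``easily verifiable by induction,'' so your outline is the intended route; the lesson is that the one step you declared trivial and deferred is exactly the step that breaks.
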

In this paper, we call all three of these varieties
{\it Narayana automata.}

%todo:  add more details of proof...

Recall that we call a numeration system {\it addable} is
if there is a finite automaton recognizing the relation
among integers $x + y = z$, if $x,y,z$ are represented
in the numeration system and input in parallel, with
the shorter input padded with leading $0$'s, if necessary.

Many of the results in this paper depend crucially on the
following results.

\begin{theorem}
The Narayana numeration system is addable, and an adder automaton
exists with $250$ states.
\end{theorem}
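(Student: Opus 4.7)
The plan is to reduce the claim to the general theory of Pisot-based linear numeration systems. First I would verify that the characteristic polynomial of the recurrence, $X^3 - X^2 - 1$, has the Pisot property: its real root $\alpha \doteq 1.4656$ exceeds $1$, while the remaining two (complex conjugate) roots $\beta,\gamma$ satisfy $|\beta| = |\gamma| \doteq 0.8260 < 1$, as already recorded in the numerical table. Thus $\alpha$ is a Pisot number, and the canonical digit set $\{0,1\}$ together with the constraint ``no block $11$ or $101$'' (already shown to define the canonical representations) forms a regular language.

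Given these two ingredients, I would invoke the theorem of Frougny (elaborated by Bruy\`ere and Hansel) which states that whenever a positional numeration system is built from a linear recurrence whose dominant eigenvalue is Pisot, and whose canonical language is regular, the relation $\{(x,y,z) : x + y = z\}$ read most-significant-digit-first in parallel is recognized by a finite automaton. The key mechanism behind the construction is \emph{bounded carry}: because $\alpha$ is Pisot, the digitwise sum of two canonical representations can be normalized back to canonical form by a local rewriting that uses only finitely many ``states'' of signed carry. A standard route is to build a nondeterministic transducer that performs digitwise addition, feeds the resulting possibly-non-canonical string into a normalizer that repeatedly applies the rewriting rules derived from $N_i = N_{i-1} + N_{i-3}$ (and the forbidden-block rules), and then determinizes the product. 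Correctness is reduced to verifying that these rewriting rules terminate with bounded ``window'' of look-around, which is precisely the Pisot guarantee.

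The hard part is not correctness but the exact count of $250$ states. I would not try to derive that number on paper; instead, I would construct the adder mechanically. Two routes are possible: either run the Frougny--Bruy\`ere--Hansel construction (build the nondeterministic adder, minimize, determinize, minimize again), or, more practically, use \texttt{Walnut}'s routine for creating an adder from a linear numeration system and then apply automaton minimization. The $250$ figure is the size reported by the minimization step; since the minimal DFA for a regular language is unique up to isomorphism, this value is a well-defined invariant and can simply be read off the output.

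A final sanity check, which I would strongly recommend carrying out within \texttt{Walnut} itself, is to express the axioms of addition as first-order formulas over the resulting automaton: commutativity $x+y = y+x$, associativity $(x+y)+z = x+(y+z)$, identity $x+0 = x$, and agreement with the incrementer of Figure~\ref{incr} ($S(x) = x+1$). Each of these is decidable once the adder is in hand, so any construction bug would be caught automatically. The main obstacle throughout is thus not mathematical subtlety but faithfully implementing the normalization procedure; the Pisot property of $\alpha$ is what makes the entire enterprise succeed.
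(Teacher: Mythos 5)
Your proposal matches the paper's proof: the paper likewise observes that $\alpha$ is a Pisot number (its conjugates $\beta,\gamma$ lie inside the unit circle), invokes the results of Frougny and co-authors to conclude addability, and obtains the $250$-state adder by running the construction suggested in those papers. The extra detail you give on normalization and the suggested sanity checks are sensible elaborations but not a different route.
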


\begin{proof}
According to results of Frougny and co-authors
\cite{Frougny:1992,Frougny&Solomyak:1992,Berend&Frougny:1994},
this follows from the fact that $\alpha$ is a Pisot number
(it is a real number $> 1$ whose conjugates $\beta$ and $\gamma$
are inside the unit circle).

We can construct the automaton using the algorithm suggested
in these papers.
\end{proof}

%\begin{remark}
%Once we have the automaton, we can verify its correctness
%with {\tt Walnut}, as follows...
%\end{remark}

{\tt Walnut} is free software that can handle logical expressions
phrased in Buchi arithmetic, a form of first-order logic
with addition.  If the expression has no free variables, {\tt Walnut}
evaluates the expression as {\tt TRUE} or {\tt FALSE}, and its calculations
provide a rigorous proof of correctness of the result.  If the expression
has free variables, then {\tt Walnut} produces a finite automaton accepting
exactly the representations of the integers that make the assertion true.

For example, the automaton in Figure~\ref{incr} can be produced by
the {\tt Walnut} command
\begin{verbatim}
def incr "?msd_nara j=i+1":
\end{verbatim}

A brief guide to {\tt Walnut}'s syntax follows; for more
detail, see \cite{Shallit:2023}.  The {\tt def} command
defines an automaton for future use.  The {\tt eval} command evaluates
whether an expression with no free variables is {\tt TRUE} or {\tt FALSE}.
The indication {\tt ?msd\_nara} indicates that integers in the expression
are represented in the Narayana representation system.
The letter {\tt A} represents the universal quantifier $\forall$
and the letter {\tt E} represents the existential quantifier
$\exists$.  The logical operations are {\tt \&} (AND), 
{\tt |} (OR), {\tt \char'176} (NOT), {\tt =>} (implication),
{\tt <=>} (IFF).  {\tt NA} represents the infinite word $\bf n$,
indexed starting at position $0$.  Regular expressions can
be defined using the {\tt reg} command.

\section{Properties of the sequence $\bf n$}

In this section we use {\tt Walnut} to prove some basic properties
of the sequence $\bf n$.  One of our basic tools is a certain
Narayana automaton,
{\tt naraef}, that takes three arguments $i,j,m$ and accepts
if and only if the factors of length $m$ beginning at positions
$i$ and $j$ are the same; that is, if
${\bf N}[i..i+m-1] = {\bf N}[j..j+m-1]$.
This can be created in {\tt Walnut} as follows:
\begin{verbatim}
def naraef "?msd_nara Au,v (u>=i & u<i+m & u+j=v+i) => NA[u]=NA[v]":
# 71 states 
\end{verbatim}

\subsection{Appearance}
Recall that the appearance function $A_m$ is the smallest bound
$x$ such that every length-$m$ factor that appears,
occurs at a starting position $\leq x$.  We can compute this
for $\bf n$ using {\tt Walnut}, as follows:
\begin{verbatim}
def has_all "?msd_nara Ai Ej j<=x & $naraef(i,j,m)":
# 58 states
def app "?msd_nara $has_all(m,x) & Ay $has_all(m,y) => y>=x":
# 41 states
\end{verbatim}
Here ${\tt app}(m,x)$ returns {\tt TRUE} if $A_m = x$.

\begin{theorem}
Suppose $m \geq 2$.   Define $D_i = \lfloor (N_i-N_{i+1} + 2N_{i+2})/3 \rfloor$
for $i \geq 0$.  Then $A_m = N_{i+4} - 1$ for
$D_i < m \leq D_{i+1}$.
\end{theorem}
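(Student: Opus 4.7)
I would reduce the theorem to a single first-order assertion that \texttt{Walnut} verifies directly, building on the \texttt{app} predicate already defined above. Since the Narayana system is addable, every integer relation appearing in the statement is expressible by a Narayana automaton, and the whole theorem becomes one closed sentence that \texttt{Walnut} can decide.

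First I would construct the auxiliary predicates needed to encode $D_i$ and $N_{i+4}$. Because $(N_j)_N = 1 0^j$, there is an easy two-input automaton $\mathtt{isN}(j,n)$ accepting iff $n = N_j$. Combined with the $250$-state adder, this yields a predicate $\mathtt{lincomb}(i,y)$ for $y = N_i - N_{i+1} + 2N_{i+2}$; equivalently, by the identity $N_{i+2}-N_{i+1}=N_{i-1}$, for $y=N_{i-1}+N_i+N_{i+2}$. The floor in the definition of $D_i$ is dispatched by expressing Euclidean division by $3$ with addition alone, setting
\[
\mathtt{isD}(i,d) \ :\equiv\ \exists y\,\bigl(\mathtt{lincomb}(i,y)\ \wedge\ 3d \le y\ \wedge\ y < 3d+3\bigr),
\]
where $3d$ abbreviates $d+d+d$. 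Since everything in sight is first-order in the addable Narayana system, $\mathtt{isD}$ is recognizable.

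With $\mathtt{isD}$ and $\mathtt{isN}$ in hand, the theorem is equivalent to the closed sentence
\[
\forall m\,\forall x\ \bigl(m \ge 2 \ \to\ \bigl(\mathtt{app}(m,x) \ \leftrightarrow\ \psi(m,x)\bigr)\bigr),
\]
where $\psi(m,x)$ is
\[
\exists i,d,d',n\ \bigl(\mathtt{isD}(i,d)\wedge \mathtt{isD}(i{+}1,d')\wedge d < m \wedge m \le d' \wedge \mathtt{isN}(i{+}4,n)\wedge x+1=n\bigr).
\]
I would then ask \texttt{Walnut} to evaluate this sentence and expect the answer \texttt{TRUE}. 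As a prerequisite to ensure that the biconditional really captures the theorem, I would separately verify (again in \texttt{Walnut}, or from the closed form for $N_i$) that the sequence $(D_i)_{i\ge 0}$ is strictly increasing with $D_0=1$, so that the half-open intervals $(D_i, D_{i+1}]$ partition $\{m \ge 2\}$.

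The main obstacle is the size of the composite automata. The adder already has $250$ states, $\mathtt{isD}$ stacks several copies of it on top of the multiplication-by-$3$ construction, and the final formula carries five existentially quantified variables in addition to the universal $m,x$; intermediate determinizations could balloon. If \texttt{Walnut} does not terminate quickly on the combined sentence, I would (i) precompile $\mathtt{isD}$ once and reuse it, (ii) split the biconditional into the two inequalities $A_m \le N_{i+4}-1$ and $A_m \ge N_{i+4}-1$, each a shorter formula, and (iii) if necessary first prove the weaker structural statement that $A_m$ is a step function whose only jumps occur at $m=D_i$, then identify the value on each step by an even shorter formula.
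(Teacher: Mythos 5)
There is a genuine gap at the very first step. You assert that ``there is an easy two-input automaton $\mathtt{isN}(j,n)$ accepting iff $n = N_j$,'' but no such synchronized Narayana automaton exists: a function computed by an automaton reading $(j)_N$ and $(n)_N$ in parallel (padded to equal length) grows at most linearly in its argument, whereas $N_j \sim c_1\alpha^j$ grows exponentially in $j$. Concretely, $(N_j)_N = 10^j$ has length $j+1$ while $(j)_N$ has length $\Theta(\log j)$, so the padded pair would force the automaton to verify that the \emph{value} encoded on one track equals the \emph{length} of the other track, which is not a regular relation. The same objection kills $\mathtt{isD}(i,d)$, since $D_i$ also grows exponentially in $i$. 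Everything downstream --- the formula $\psi(m,x)$ with its existential quantifier over the integer $i$ --- therefore cannot be expressed in the logical language {\tt Walnut} decides.

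The paper's proof avoids this by never introducing the index $i$ as a first-order integer variable. Instead it quantifies over the five \emph{values} $v,w,x,y,z$ and uses a five-track regular expression ({\tt five}) of the form $[0,0,0,0,0]^*[1,0,0,0,0]\cdots[0,0,0,0,1][0,0,0,0,0]^*$ to assert that $(v,w,x,y,z) = (N_{i+4},N_{i+3},N_{i+2},N_{i+1},N_i)$ for \emph{some} $i$; the ``for some $i$'' is absorbed into the Kleene star rather than into a quantifier. The quantities $D_i$ and $D_{i+1}$ are then computed from these values by {\tt Walnut}'s built-in integer division, and the theorem becomes the implication $(D_i < m \le D_{i+1} \wedge m\ge 2 \wedge \mathtt{app}(m,t)) \Rightarrow t+1 = v$. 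Your overall plan (reduce to one decidable sentence involving $\mathtt{app}$) is the right instinct, and your auxiliary checks on the floor and on the intervals $(D_i,D_{i+1}]$ partitioning $\{m\ge 2\}$ are sensible, but the reduction only goes through once the quantification over $i$ is replaced by the regular-expression device.
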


\begin{proof}
We can prove this with {\tt Walnut}, as follows.
\begin{verbatim}
reg five msd_nara msd_nara msd_nara msd_nara msd_nara 
   "[0,0,0,0,0]*[1,0,0,0,0][0,1,0,0,0][0,0,1,0,0]
   [0,0,0,1,0][0,0,0,0,1][0,0,0,0,0]*":
eval appearance_check "?msd_nara Am,t,d0,d1,v,w,x,y,z 
  ($five(v,w,x,y,z) & $app(m,t) & d0=((z+2*x)-y)/3 &
  d1=((y+2*w)-x)/3 & d0<m & m<=d1 & m>=2) => t+1=v":
# 82856 ms
\end{verbatim}
And {\tt Walnut} returns {\tt TRUE}.
\end{proof}

\begin{corollary}
We have $A_m/m \leq \alpha^2+\alpha \doteq 3.61347026758$ for $m\geq 1$,
and the constant is optimal.
\end{corollary}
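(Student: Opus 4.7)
The plan is to combine the formula for $A_m$ just proved with the closed-form estimates for the Narayana numbers from Section~3. By the theorem, when $D_i < m \le D_{i+1}$ we have $A_m = N_{i+4}-1$, so
$$\frac{A_m}{m} \;\le\; \frac{N_{i+4}-1}{D_i+1}.$$
It therefore suffices to show that this upper envelope is bounded by $\alpha^2+\alpha$ for every $i \ge 0$, and approaches $\alpha^2+\alpha$ as $i \to \infty$. Both the upper bound and optimality will follow.

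The constant $\alpha^2+\alpha$ is pinned down by an algebraic identity. From $\alpha^3 = \alpha^2+1$ I get $\alpha^4 = \alpha^2+\alpha+1$, and a direct expansion shows
$$(\alpha^2+\alpha)(1-\alpha+2\alpha^2) \;=\; 2\alpha^4 + \alpha^3 + \alpha \;=\; 3\alpha^2+3\alpha+3 \;=\; 3\alpha^4,$$
so $3\alpha^4/(1-\alpha+2\alpha^2) = \alpha^2+\alpha$. Since $N_i - N_{i+1}+2N_{i+2} \sim c_1\alpha^i(1-\alpha+2\alpha^2)$ and $N_{i+4} \sim c_1\alpha^{i+4}$, this identity is exactly what makes the envelope asymptote to $\alpha^2+\alpha$.

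To turn this into a pointwise inequality, I would use the explicit error bounds \eqref{n1}, \eqref{n2} (extended to $k=4$ via $N_{i+4}=N_{i+3}+N_{i+1}$). Writing $3(D_i+1) = (N_i - N_{i+1}+2N_{i+2}) + 3\eta_i$ with $\eta_i \in (0,1]$, the desired inequality rearranges to
$$3N_{i+4} - (\alpha^2+\alpha)(N_i - N_{i+1}+2N_{i+2}) \;\le\; 3(\alpha^2+\alpha)\eta_i + 3.$$
The identity above forces the dominant $\alpha^i$ terms on the left to cancel, leaving an $O(|\beta|^i)$ remainder (with an explicit constant obtainable from \eqref{n1}--\eqref{n3}). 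Since $|\beta|<1$, this remainder is less than $3$ once $i$ exceeds an effectively computable threshold $i_0$, giving the inequality for large $i$. For the finitely many smaller values of $i$, a direct numerical check of $(N_{i+4}-1)/(D_i+1)$ completes the upper bound for every $m \ge 1$.

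Optimality is then immediate: taking $m_i = D_i+1$, we have $A_{m_i}/m_i = (N_{i+4}-1)/(D_i+1) \to \alpha^2+\alpha$, so no smaller constant can serve as an upper bound. The main obstacle is the error bookkeeping in the third paragraph: one has to be careful with signs when combining \eqref{n1}--\eqref{n3} and $N_{i+4}=N_{i+3}+N_{i+1}$ so that the effective threshold $i_0$ is small enough that the residual finite check is genuinely routine.
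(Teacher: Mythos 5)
Your proposal follows the same route as the paper: note that on each interval $(D_i, D_{i+1}]$ the ratio $A_m/m$ is maximized at $m=D_i+1$, and show that $(N_{i+4}-1)/(D_i+1)$ stays below and converges to $3\alpha^4/(1-\alpha+2\alpha^2)=\alpha^2+\alpha$. The paper simply asserts the convergence "from below," whereas you sketch how to justify that pointwise bound via the error estimates \eqref{n1}--\eqref{n3} plus a finite check; this is a legitimate filling-in of the same argument rather than a different one.
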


\begin{proof}
From the description of the appearance function above,
we know that the largest value of $A_m/m$ in the
range $m \in [D_i + 1, D_{i+1}]$ is at $m = D_i + 1$.
In this case $A_m/m = (N_{i+4} - 1)/(D_i + 1)$, which 
tends to $3\alpha^4/(1-\alpha+2\alpha^2) = \alpha^2+\alpha$ from below.
\end{proof}

\subsection{Critical exponent}

Recall that a finite word $w = w[0..n]$ has period $p$, for $1 \leq p \leq n$,
if $w[0..n-p] = w[p..n]$.  For example, {\tt alfalfa} has periods
$3,6,$ and $7$.  The smallest period is called {\it the\/} period, and
is denoted $\per(w)$.  The exponent of a finite word $w$ is
$\exp(w) := |w|/\per(w)$.  The critical exponent of a (finite or infinite)
word $w$ is defined to be $\sup \{ \exp(x) \suchthat 
x \text{ a nonempty factor of } w \}$.

\begin{theorem}
The critical exponent of $\bf n$ is
$\beta = (\alpha^2 + \alpha + 5)/3 \doteq 2.871156755860518460227649$,
where
$\alpha \doteq 1.465571231876768026656731$ is the real root of $X^3=X^2 + 1$.
\label{crit}
\end{theorem}

\begin{proof}
We use {\tt Walnut}.  The idea is to successively obtain Narayana automata,
as follows:
\begin{itemize}
\item {\tt nara\_isaper}:  takes $3$ inputs $i, m, p$ and accepts
if $p$ is a period of ${\bf n}[i..i+m-1]$.

\item {\tt nara\_per}: takes $3$ inputs $i, m, p$ and accepts
if $p$ is the (smallest) period of ${\bf n}[i..i+m-1]$.

\item {\tt nara\_lp}:  takes $2$ inputs $m, p$ and accepts 
if $p$ is the smallest period over all length-$m$ factors of
$\bf n$.

\item {\tt nara\_max}:  takes two inputs $m$ and $p$ and
accepts if the shortest period over all length-$m$ words
is $p$, and every longer factor has period $>p$.

\item {\tt bignm}:  takes two inputs $m$ and $p$ and
accepts if the shortest period over all length-$m$ words
is $p$, and every longer factor has period $>p$, 
and $m/p > 14/5$.
\end{itemize}
The point, of course, is that the critical exponent
is the supremum of the set $\{ m/p \suchthat {\tt bignm}
\text{ accepts the pair $(m,p)$} \}$.
The implementation in {\tt Walnut} is as follows:
\begin{verbatim}
def nara_isaper "?msd_nara p>0 & p<=m & $naraef(i,i+p,m-p)":
# given i,m,p, decide if p>0 is a period of N[i..i+m-1]
# 882 states

def nara_per "?msd_nara $nara_isaper(i,m,p) & Aq (q<p) 
   => ~$nara_isaper(i,m,q)":
# least period
# 410 states

def nara_lp "?msd_nara Ei $nara_per(i,m,p) & 
   Aj,q $nara_per(j,m,q) => q>=p":
# least period over all words of length m
# 33 states

def nara_max "?msd_nara $nara_lp(m,p) & Aq (q>m) => ~$nara_lp(q,p)":
# there is a factor of length m with period p and every longer word
# has a longer period; 21 states

def bignm "?msd_nara $nara_max(m,p) & 5*m>14*p":
# (m,p) with m/p > 15/4
\end{verbatim}

The automaton {\tt bignm} is displayed in Figure~\ref{naramax}.
\begin{figure}[htb]
\begin{center}
\includegraphics[width=6.5in]{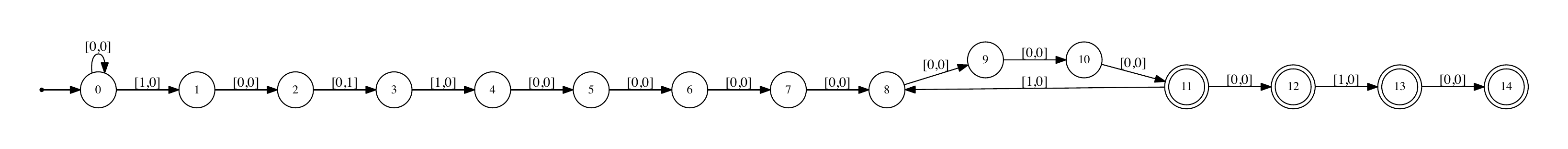}
\end{center}
\caption{Narayana automaton for maximum of period.}
\label{naramax}
\end{figure}

Inspection of Figure~\ref{naramax} indicates that $(m,p)$ is accepted
and only if it is of the form
$$ [1,0][0,0][0,1][1,0][0,0]^7([1,0][0,0]^3)^i \{ \epsilon, [0,0],
[0,0][1,0], [0,0][1,0][0,0] \}.$$
These correspond to fractions
\begin{align*}
& (N_{4n+14} + N_{4n+11} + \sum_{0\leq i \leq n} 4i+3 )/N_{4n+12} \\
& (N_{4n+15} + N_{4n+12} + \sum_{0 \leq i \leq n} 4i+4)/N_{4n+13} \\
& (N_{4n+16} + N_{4n+13} + 1 + \sum_{0 \leq i \leq n} 4i+5)/N_{4n+14} \\
& (N_{4n+17} + N_{4n+14} + 2 + \sum_{0 \leq i \leq n} 4i+6)/N_{4n+17} 
\end{align*}

\begin{lemma}
Let $n \geq 0$ and write $n = 4a+b$ with $b \in \{3,4,5,6\}$.
Define $g_b$ for $b \in \{3,4,5,6\}$ as follows:
$g_3 = 4$, $g_4 = 5$, $g_5 = 7$, $g_6 = 11$.
Then
$$N_b + N_{4+b} + N_{8+b} + \cdots + N_{4a+b} = 
(-5 N_{4a+b+11} + 4N_{4a+b+10} + 5N_{4a+b+9} - g_b)/3.$$
\label{a4b}
\end{lemma}

\begin{proof}
A tedious induction, or by {\tt Walnut}.  We omit the details.
\end{proof}

We can now complete the proof of Theorem~\ref{crit}.
By rearranging and applying the Lemma~\ref{a4b}, we find  the four fractions are
\begin{align*}
&(N_{4n+14} + N_{4n+13} + 5 N_{4n+12} - 4)/(3N_{4n+12})\\
&(N_{4n+15} + N_{4n+14} + 5 N_{4n+13} - 5)/(3N_{4n+13})\\
&(N_{4n+16} + N_{4n+15} + 5 N_{4n+14} - 4)/(3N_{4n+14})\\
&(N_{4n+17} + N_{4n+16} + 5 N_{4n+15} - 5)/(3N_{4n+15}).
\end{align*}
Using estimates for $N_{i+j}-\alpha^j N_i$ for $j=0,1,2$
we see that all of these fractions are
$< (\alpha^2 + \alpha + 5)/3$ but converge to it from below.
\end{proof}

\subsection{Subword complexity}

Recall that the subword complexity function of a word $\rho_{\bf x}$ maps $n$ to
the number of distinct factors of length $n$.
\begin{theorem}
The subword complexity function $\rho_{\bf n}$ satisfies
$\rho_{\bf n}(i) = 2i+1$.
\label{sc}
\end{theorem}

\begin{proof}
We can generate a linear representation for the number of distinct
length-$n$ factors using {\tt Walnut}.  To do so we count the number
of {\it novel\/} factors; that is, those that never appeared earlier
in $\bf n$.  We can also generate a linear representation
for the function $n \rightarrow 2n+1$ using {\tt Walnut}.
\begin{verbatim}
def novel n "?msd_nara Aj (j<i) => ~$naraef(i,j,n)":
# 82 states
def a2n1 n "?msd_nara i<2*n+1":
# 123 states
\end{verbatim}
Now we use a minimization algorithm of Berstel and Reutenauer
\cite[\S 2.3]{Berstel&Reutenauer:2011}.  When we minimize these
two linear representations, we get two identical linear
representations of rank $12$.  This proves
that the functions $\rho_{\bf n} (i)$ and $i \rightarrow 2i+1$
are identical.
\end{proof}

\begin{remark}
This is a special case of a much
more general result \cite[Theorem 1.1]{Bernat&Masakova&Pelantova:2007}.
\end{remark}

\subsection{Palindromes}

\begin{proposition}
The only nonempty palindromes in $\bf n$ are $0, 1, 2, 00, 010, 101$.
\end{proposition}

\begin{proof}
We can easily verify this by enumerating all of the factors of
length $\leq 5$.
\end{proof}

\begin{remark}
By the much more
general results in \cite{Ambroz&Masakova&Pelantova&Frougny:2006}
we know there are do not exist arbitrarily large palindromes in $\bf n$.
\end{remark}

\subsection{Right-special factors}

Recall that a factor $x$ is called right-special if
there are two distinct letters $a$ and $b$
such that $xa$ and $xb$ are both factors.

\begin{theorem}
There are two right-special factors of every length $n \geq 1$, one
one of which ends in $0$ and one that ends in $1$.  Furthermore
there is a Narayana DFAO {\tt SP0} (resp., {\tt SP1})
of $56$ states (resp., $64$ states)
that generates an infinite word such that the reverse of every
prefix is a right-special factor that ends in $0$ (resp., $1$).
\end{theorem}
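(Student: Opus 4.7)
The plan is to verify everything with \texttt{Walnut}. First I would introduce the predicate
\begin{verbatim}
def rspec "?msd_nara Ej $naraef(i,j,n) & NA[i+n]!=NA[j+n]":
\end{verbatim}
accepting when position $i$ starts a right-special factor of length $n$. Combining this with Theorem~\ref{sc} ($\rho_\mathbf{n}(n+1)-\rho_\mathbf{n}(n)=2$) and the standard identity $\rho(n+1)-\rho(n)=\sum_w(r(w)-1)$, where $r(w)\in\{1,2,3\}$ counts the right extensions of a length-$n$ factor $w$, there are only two possibilities: either some length-$n$ factor has $r=3$ while the rest have $r=1$, or exactly two factors have $r=2$ while the rest have $r=1$. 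I would rule out the first case with
\begin{verbatim}
eval no_triple "?msd_nara ~Ei,n,j,k ($naraef(i,j,n) & $naraef(i,k,n)
   & NA[i+n]!=NA[j+n] & NA[i+n]!=NA[k+n] & NA[j+n]!=NA[k+n])":
\end{verbatim}
expecting \texttt{TRUE}. This leaves exactly two right-special factors of every length $n \geq 1$. The \texttt{Walnut} queries
\begin{verbatim}
eval end0 "?msd_nara An (n>=1) => Ei $rspec(i,n) & NA[i+n-1]=0":
eval end1 "?msd_nara An (n>=1) => Ei $rspec(i,n) & NA[i+n-1]=1":
\end{verbatim}
when both returning \texttt{TRUE}, force precisely one right-special factor of each length to end in $0$ and the other in $1$.

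For the DFAO construction, observe that since the length-$n$ right-special factor ending in $0$ is unique and the length-$n$ suffix of a right-special factor is itself right-special, the length-$(n+1)$ right-special factor ending in $0$ has the length-$n$ one as its length-$n$ suffix. Consequently, the infinite word $\texttt{SP0}=s_0 s_1 s_2\cdots$ defined by letting $s_n$ be the leftmost letter of the length-$(n+1)$ right-special factor ending in $0$ is well-defined, and the reverse of its length-$n$ prefix is precisely that factor. In \texttt{Walnut}:
\begin{verbatim}
def sp0is0 n "?msd_nara Ei $rspec(i,n+1) & NA[i+n]=0 & NA[i]=0":
def sp0is1 n "?msd_nara Ei $rspec(i,n+1) & NA[i+n]=0 & NA[i]=1":
def sp0is2 n "?msd_nara Ei $rspec(i,n+1) & NA[i+n]=0 & NA[i]=2":
combine SP0 sp0is0=0 sp0is1=1 sp0is2=2:
\end{verbatim}
and analogously for \texttt{SP1} with $0$ replaced by $1$ in the last-letter constraint. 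The state counts $56$ and $64$ are then read from \texttt{Walnut}'s minimized output.

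The main obstacle is controlling the size of the intermediate automata: \texttt{rspec} quantifies through the $71$-state \texttt{naraef} and uses the $250$-state Narayana adder, and \texttt{no\_triple} nests three copies of \texttt{naraef}. Termination is assured since everything is first-order definable in Buchi arithmetic over the Pisot base $\alpha$; the substantive check is simply that \texttt{Walnut}'s minimized DFAOs match the asserted state counts.
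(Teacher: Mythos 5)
Your proposal is correct and follows essentially the same route as the paper: {\tt Walnut} checks that for every length a right-special factor ending in $0$ and one ending in $1$ exist, and the DFAOs are built from the first letter of the length-$(n+1)$ right-special factor of each type (the paper pins down a canonical occurrence via its {\tt novel} predicate and verifies the suffix-nesting with an explicit {\tt naraef} check rather than your uniqueness argument, but the content is the same). One small practical point: your {\tt no\_triple} query must exclude $n=0$ (the empty factor has all three right extensions, so as written it returns {\tt FALSE}), though that check is redundant anyway, since once {\tt end0} and {\tt end1} both hold the identity $\rho_{\bf n}(n+1)-\rho_{\bf n}(n)=2$ already forces exactly two right-special factors of each length.
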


\begin{proof}
The following {\tt Walnut code} checks the first claim.
\begin{verbatim}
def rtspec0 "?msd_nara $novel(x,n) & NA[x+n-1]=@0 &
   Ej $naraef(x,j,n) & NA[x+n]!=NA[j+n]":
# 56 states
# accepts n,x if x is the position of a novel factor ending in 0 
def rtspec1 "?msd_nara $novel(x,n) & NA[x+n-1]=@1 &
   Ej $naraef(x,j,n) & NA[x+n]!=NA[j+n]":
# 61 states
# accepts n,x if x is the position of a novel factor ending in 1
eval exist_rs_0 "?msd_nara An (n>=1) => Ex $rtspec0(n,x)":
eval exist_rs_1 "?msd_nara An (n>=1) => Ex $rtspec1(n,x)":
\end{verbatim}

For the second claim we can define the words as follows:
\begin{verbatim}
def rtspec00 "?msd_nara Ex $rtspec0(m+1,x) & NA[x]=@0":
def rtspec01 "?msd_nara Ex $rtspec0(m+1,x) & NA[x]=@1":
def rtspec02 "?msd_nara Ex $rtspec0(m+1,x) & NA[x]=@2":
combine SP0 rtspec00=0 rtspec01=1 rtspec02=2:
def rtspec10 "?msd_nara Ex $rtspec1(m+1,x) & NA[x]=@0":
def rtspec11 "?msd_nara Ex $rtspec1(m+1,x) & NA[x]=@1":
def rtspec12 "?msd_nara Ex $rtspec1(m+1,x) & NA[x]=@2":
combine SP1 rtspec10=0 rtspec11=1 rtspec12=2:
# check that if there is a right-special factor of length n
# at position x and one of length n+1 at position y
# then x is a suffix of y
eval check "?msd_nara An,x,y (n>=1 & $rtspec0(n,x) &
   $rtspec0(n+1,y)) => $naraef(x,y+1,n)":
\end{verbatim}
\end{proof}
%eval check_SP0 "?msd_nara An (n>=1) => Ex $rtspec0(n,x) &
%   At (t<n) => NA[x+t]=SP0[n-(t+1)]":
%# out of memory
%eval check_SP0 "?msd_nara An (n>=1) => Ex $rtspec0(n,x) & Au,v (u<n & u+v+1=x+n) => NA[u]=SP0[v]":
%# out of memory

The first few terms of the sequences generated by
{\tt SP0} and {\tt SP1} are given in Table~\ref{sp0tab}.
\begin{table}[H]
\begin{center}
\begin{tabular}{c|cccccccccccccccccccccccccc}
$j$ & 0 & 1& 2& 3& 4& 5& 6& 7& 8& 9&10&11&12&13&14&15&16&17&18&19\\
\hline
{\tt SP0}[j] &  0&2&1&0&1&0&0&2&1&0&0&2&1&0&2&1&0&1&0&0&\\
{\tt SP1}[j] &  1&0&0&2&1&0&2&1&0&1&0&0&2&1&0&1&0&0&2&1&\\
\end{tabular}
\end{center}
\caption{Infinite words coding the right-special factors of ${\bf n}$.}
\label{sp0tab}
\end{table}

\subsection{Positions of symbols}
\label{pos-subsec}

There are several sequences of interest related to the position of
symbols in $\bf n$.  To agree with the existing literature,
in this section we work with ${\bf n}'$, which is ${\bf n}$ but
indexed starting at position $1$:  
$$ {\bf n}' = n'_1 n'_2 n'_3 \cdots,$$
where $n'_i = n_{i-1}$ for $i \geq 1$, and we number occurrences
where the first occurrence is occurrence $1$ (not $0$).

We define
\begin{align}
\text{$p_i (j)$ is the 
position of the $j$'th occurrence of $i$ in ${\bf n}'$, 
for $i \in \{0,1,2\}$;}
\label{pij} \\
\text{$p_{02} (j)$ is 
the position of the $j$'th occurrence of a symbol in $\{0,2\}$ in ${\bf n}'$.} \label{p02p}
\end{align}

\begin{table}[H]
\begin{center}
\begin{tabular}{c|cccccccccccccccccccccccccc}
$j$ & 1& 2& 3& 4& 5& 6& 7& 8& 9&10&11&12&13&14&15&16&17&18\\
\hline
$n'_j$ & 0&1&2&0&0&1&0&1&2&0&1&2&0&0&1&2&0&0&\\
$p_0 (j)$ & 1& 4& 5& 7&10&13&14&17&18&20&23&24&26&29&32&33&35&38\\
$p_1 (j)$ & 2& 6& 8&11&15&19&21&25&27&30&34&36&39&43&47&49&52&56\\
$p_2 (j)$ & 3& 9&12&16&22&28&31&37&40&44&50&53&57&63&69&72&76&82\\
$p_{02} (j)$ & 1& 3& 4& 5& 7& 9&10&12&13&14&16&17&18&20&22&23&24&26
\end{tabular}
\end{center}
\caption{First few values of $p_0$, $p_1$, $p_2$, and $p_{02}$.}
\end{table}

Here $p_0$ is \seqnum{A202342}, $p_1$ is \seqnum{A136496},
$p_2$ is \seqnum{A381841}, and $p_{02}$ is \seqnum{A136495}.

\begin{remark}
The four sequences $p_0(j)$, $p_1(j)$,
$p_2(j)$, $p_{02} (j)$ appear in 
\cite[Table 7.1]{Hoggatt&Bicknell-Johnson:1982}
where they are called $A_j$, $B_j$, $C_j$, and $H_j$, respectively.
\end{remark}

\begin{theorem}
We have
\begin{align}
p_{02} (j) &= [ (j-1)_N 0]_N + 1 \label{p02}\\
p_0(j) &= [(j-1)_N 00]_N + 1 \label{p0}\\
p_1(j) &= [(j-1)_N 000]_N + 2 = p_0(j) + j \label{p1}\\
p_2(j) &= [(j-1)_N 0000]_N + 3 = p_1(j) + p_{02} (j) \label{p2} \\
\end{align}
for $j \geq 1$.
\label{p012}
\end{theorem}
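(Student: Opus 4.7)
The plan is to prove each identity directly from the Corollary, which gives closed-form expressions for the letter counts $|p_i|_0$, $|p_i|_1$, $|p_i|_2$ in terms of the Narayana digits of $i$, together with the automaton characterization of $\mathbf{n}$: namely, $\mathbf{n}[i]=1$ iff $(i)_N$ ends in $1$, $\mathbf{n}[i]=2$ iff $(i)_N$ ends in $10$, and $\mathbf{n}[i]=0$ otherwise. For each claimed position $p$ I would identify the canonical Narayana representation of $p-1$, read off the letter $\mathbf{n}[p-1]$ from its final digits, and use the Corollary to count how many occurrences of the relevant symbol(s) appear in the prefix $p_{p-1}$. Showing this count is $j-1$ and that $\mathbf{n}[p-1]$ is of the desired type forces $p$ to be the $j$-th occurrence (as indexed in $\mathbf{n}'$).

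For $p=[(j-1)_N 00]_N+1$ we have $(p-1)_N = (j-1)_N 00$, which remains canonical since appending zeros cannot produce the forbidden blocks $11$ or $101$. It ends in $00$, so $\mathbf{n}[p-1]=0$, and the Corollary applied to this representation gives $|p_{p-1}|_0 = [(j-1)_N]_N + 0 + 0 = j-1$, settling $p_0(j)=p$. For $p=[(j-1)_N 000]_N+2$ the canonical form of $p-1$ is $(j-1)_N 001$ (again canonical, since the trailing $001$ cannot combine with preceding digits to form $11$ or $101$); it ends in $1$, so $\mathbf{n}[p-1]=1$, and the Corollary yields $|p_{p-1}|_1 = [(j-1)_N]_N + 0 + 0 = j-1$. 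The treatments of $p_{02}$ and $p_2$ are similar in spirit but require combining two of the Corollary's formulas. For instance, for $p_{02}(j)=p=[(j-1)_N 0]_N+1$ one has $(p-1)_N = (j-1)_N 0$ and must check $|p_{p-1}|_0 + |p_{p-1}|_2 = j-1$; writing out both Corollary expressions and collecting terms, the coefficients of the digits of $(j-1)_N$ telescope back to $[(j-1)_N]_N$ via the Narayana recurrence $N_i = N_{i-1}+N_{i-3}$.

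The secondary identities $p_1(j)=p_0(j)+j$ and $p_2(j)=p_1(j)+p_{02}(j)$ are then arithmetic consequences of the four main formulas: each shifted expression $[(j-1)_N 0^k]_N$ has an explicit bit-by-bit expansion, and the differences between consecutive shifts are again controlled by the Narayana recurrence. Alternatively, in the spirit of the rest of the paper, one could verify these in a single Walnut \texttt{eval}, since the shift $m\mapsto [m_N 0^k]_N$ is easily realized as a Narayana automaton (its representation equals the input with $k$ trailing zeros appended). The main delicate point will be the telescoping computation for the $p_{02}$ and $p_2$ cases, together with the bookkeeping needed to confirm canonicity of the intermediate representations and the boundary case $j=1$ where $(j-1)_N=\epsilon$; none of this is conceptually hard, but the algebra requires care.
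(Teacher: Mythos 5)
Your proposal is correct, but it proves the theorem by a genuinely different route than the paper. The paper's proof is purely mechanical: it realizes the map $m \mapsto [(m)_N 0]_N$ as a two-tape automaton {\tt lshift} via a regular expression, composes it with the adder to build synchronized automata for the four formulas, and then has {\tt Walnut} verify that each candidate function is increasing and that its range is exactly $\{x : n'_x = i\}$ (resp.\ $\{x : n'_x \in \{0,2\}\}$) --- which characterizes it as the claimed occurrence-position function. You instead derive the formulas by hand from two earlier in-paper results: the Corollary giving $|p_i|_0, |p_i|_1, |p_i|_2$ in terms of the trailing digits of $(i)_N$, and the digit-suffix description of $n_i$ ($1$ iff $(i)_N$ ends in $1$, $2$ iff it ends in $10$). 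I checked the delicate points you flag and they all go through: $(j-1)_N 00$, $(j-1)_N 001$ and $(j-1)_N 0010$ are canonical since appending these suffixes cannot create the blocks $11$ or $101$; the telescoping $|p_{p-1}|_0 + |p_{p-1}|_2 = [(j-1)_N]_N$ for the $p_{02}$ case follows from $N_i = N_{i-1} + N_{i-3}$ with the conventions $N_{-1}=N_{-2}=1$, $N_{-3}=0$; and the boundary case $j=1$ works with $(0)_N = \epsilon$. (One small correction: $p_2$ needs only the single formula for $|p_{p-1}|_2$ applied to the length-$(t+4)$ representation $(j-1)_N 0010$; only $p_{02}$ requires combining two counts.) The trade-off is the usual one in this paper: your argument is self-contained and exhibits the arithmetic reason the formulas hold, at the cost of canonicity and indexing bookkeeping, while the paper's {\tt Walnut} verification is essentially instantaneous to state but rests on the correctness of the adder and the decision procedure.
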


\begin{proof}
We can prove all these assertions with {\tt Walnut}.  The idea
is to define automata {\tt p02}, {\tt p0}, {\tt p1}, {\tt p2}
implementing the formulas \eqref{p02}--\eqref{p2} above, and
and then check that the functions they compute really
do verify the definitions \eqref{pij}--\eqref{p02p}.

We need
an automaton {\tt lshift} which takes two arguments that are
binary strings, $x$ and $y$, and accepts if there is a string
$z$ such that $x = 0z$ and $y=z0$.  This is generated by
using a regular expression.

The following {\tt Walnut} code implements the formulas above.
\begin{verbatim}
reg lshift {0,1} {0,1} "([0,0]|[0,1][1,1]*[1,0])*":
def p0 "?msd_nara Ex,y $lshift(j-1,x) & $lshift(x,y) & z=y+1":
def p1 "?msd_nara Ex,y,t $lshift(j-1,x) & $lshift(x,y) & $lshift(y,t) &
   z=t+2":
def p2 "?msd_nara Ex,y,t,u $lshift(j-1,x) & $lshift(x,y) & $lshift(y,t) &
   $lshift(t,u) & z=u+3":
def p02 "?msd_nara Ex $lshift(j-1,x) & z=x+1":
\end{verbatim}

Now we check to see if they match the definition.  For example, for
$p_0$ we need to see that (1) it is an increasing function  and
(2) it takes a value $x$ iff $n'_x = 0$.
\begin{verbatim}
eval p0_test1 "?msd_nara Aj,x,y (j>=1 & $p0(j,x) & $p0(j+1,y)) => x<y":
# does p0 compute an increasing function?
eval p0_test2 "?msd_nara Ax (Ej j>=1 & $p0(j,x)) <=> NA[x-1]=@0":
# is x a value of p0 if and only if the corresponding symbol of n' equal 0?
\end{verbatim}

Similar code can be used to check the correctness of the other three
functions:
\begin{verbatim}
eval p1_test1 "?msd_nara Aj,x,y (j>=1 & $p1(j,x) & $p1(j+1,y)) => x<y":
eval p1_test2 "?msd_nara Ax (Ej j>=1 & $p1(j,x)) <=> NA[x-1]=@1":
eval p1_test3 "?msd_nara Aj,x,y (j>=1 & $p0(j,x) & $p1(j,y)) => y=x+j":
eval p2_test1 "?msd_nara Aj,x,y (j>=1 & $p2(j,x) & $p2(j+1,y)) => x<y":
eval p2_test2 "?msd_nara Ax (Ej j>=1 & $p2(j,x)) <=> NA[x-1]=@2":
eval p2_test3 "?msd_nara Ax,y,z (Ej j>=1 & $p02(j,x) & $p1(j,y) & $p2(j,z)) => z=x+y":
eval p02_test1 "?msd_nara Aj,x,y (j>=1 & $p02(j,x) & $p02(j+1,y)) => x<y":
eval p02_test2 "?msd_nara Ax (Ej j>=1 & $p02(j,x)) <=> 
   (NA[x-1]=@0|NA[x-1]=@2)":
\end{verbatim}
\end{proof} 

\subsection{The $3$-Zeckendorf array}
\label{zeck3}

We now discuss the relationship of the sequences
$p_0$, $p_1$, $p_2$, and $p_{02}$ to the $3$-Zeckendorf
array, appearing as \cite[Table 2]{Kimberling:1995b} and 
the array $\chi_3$ in \cite[Table 2]{Ericksen&Anderson:2012},
where they are indexed starting at position $0$.
As given in Ericksen and Andersen \cite{Ericksen&Anderson:2012},
the element $z_{i,0}$ in row $i$ and column $0$ of $\chi_3$ is
the $i$'th natural number whose Narayana representation ends in
the digit $1$, while the element in row $i$ and column $j$
for $j \geq 0$ is $[(z_{i,0})_N 0^j]_N$.  It follows that
$z_{i,j} = z_{i,j-1} + z_{i,j-3}$ for $j \geq 3$.  We can then
use this recurrence relation to define $z_{i,j}$ for $j \geq 3$.

Let us now show that $z_{i,0} = p_1(i+1)-1$ for $i \geq 0$.  In
order to do that, we need to see that $(z_{i,0})_N$ ends in
$1$, and furthermore every integer whose representation ends in
$1$ appears as a value of $z_{i,0}$.  We can do this as follows:
\begin{verbatim}
def col0 "?msd_nara $p1(i+1,z+1)":
reg end1 msd_nara "(0+1)*1":
eval test1 "?msd_nara Ax (Ei $col0(i,x)) <=> $end1(x)":
\end{verbatim}

Now let us define columns $1$ and $2$ using their Narayana representations.
\begin{verbatim}
def col1 "?msd_nara Ex $col0(i,x) & $lshift(x,z)":
def col2 "?msd_nara Ex $col1(i,x) & $lshift(x,z)":
\end{verbatim}
From these we can easily define columns $-3$, $-2$, and $-1$:
\begin{verbatim}
def colm1 "?msd_nara Ex,y $col2(i,x) & $col1(i,y) & z+y=x":
def colm2 "?msd_nara Ex,y $col1(i,x) & $col0(i,y) & z+y=x":
def colm3 "?msd_nara Ex,y $col0(i,x) & $colm1(i,y) & z+y=x":
\end{verbatim}

\begin{proposition}
For $i\geq 0$ we have
\begin{itemize}
\item[(a)] $z_{i,-3} = i$.
\item[(b)] $z_{i,-2} = p_{02}(i+1)$.
\item[(c)] $z_{i,-1} = p_0(i+1)$.
\item[(d)] $z_{i,0} = p_1(i+1) - 1$.
\item[(e)] $z_{i,1} = p_2(i+1) - 1$.
\item[(f)] $z_{i,2} = p_0(i+1) + p_2(i+1) - 1$.
\item[(g)] $z_{i,3} = p_0(i+1) + p_1(i+1) + p_2(i+1) - 2$.
\item[(h)] $z_{i,4} = p_0(i+1) + p_1(i+1) + 2 p_2(i+1) - 3$.
\item[(i)] $z_{i,5} = 2p_0(i+1) + p_1(i+1) + 3 p_2(i+1) - 4$.
\item[(j)] $z_{i,6} = 3p_0(i+1) + 2p_1(i+1) + 4p_2(i+1) - 6$.
\item[(k)] $z_{i,7} = 4p_0(i+1) + 3p_1(i+1) + 6p_2(i+1) - 9$.
\end{itemize}
\end{proposition}

\begin{proof}
\item[(a)-(c)]
We use the following {\tt Walnut} code:
\begin{verbatim}
eval parta "?msd_nara Ai $colm3(i,i)":
eval partb "?msd_nara Ai,x $colm2(i,x) <=> $p02(i+1,x)":
eval partc "?msd_nara Ai,x $colm1(i,x) <=> $p0(i+1,x)":
\end{verbatim}
\item[(d)] Already discussed above.
\item[(e)] We use the {\tt Walnut} code:
\begin{verbatim}
eval parte "?msd_nara Ai,x $col1(i,x) <=> (Ey $p2(i+1,y) & x+1=y)":
\end{verbatim}
\item[(f)-(k)] Follows trivially from parts (c) and (e).
\end{proof}

Note that, suitably reindexed,
\begin{itemize}
\item $(z_{i,-3})_{i \geq 0}$ is \seqnum{A001477};
\item $(z_{i,-2})_{i \geq 0}$ is \seqnum{A136495};
\item $(z_{i,-1})_{i \geq 0}$ is \seqnum{A202342};
\item $(z_{i,0})_{i \geq 0}$ is \seqnum{A020942};
\item $(z_{i,1})_{i \geq 0}$ is \seqnum{A064105};
\item $(z_{i,2})_{i \geq 0}$ is \seqnum{A064106};
\item $(z_{i,3})_{i \geq 0}$ is \seqnum{A372749};
\item $(z_{i,4})_{i \geq 0}$ is \seqnum{A372750};
\item $(z_{i,5})_{i \geq 0}$ is \seqnum{A372752};
\item $(z_{i,6})_{i \geq 0}$ is \seqnum{A372756};
\item $(z_{i,7})_{i \geq 0}$ is \seqnum{A372757}.
\end{itemize}

\begin{proposition}
For $i \geq 0$ and $j \in \Zee$ we have
$z_{i,j} = N_{j-4} p_0(i+1) + N_{j-5} p_1(i+1) + N_{j-3} p_2 (i+1) - N_{j-2}$.
\end{proposition}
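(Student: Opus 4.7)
The plan is to exploit the fact that both sides of the proposed identity, viewed as functions of $j$ with $i$ held fixed, satisfy the Narayana recurrence $F(j) = F(j-1) + F(j-3)$. On the left this is built into the definition of the $3$-Zeckendorf array: $z_{i,j} = z_{i,j-1} + z_{i,j-3}$ holds for $j \geq 3$, and the columns $z_{i,-3}, z_{i,-2}, z_{i,-1}$ are defined by the authors via the same recurrence run backward, so the recurrence holds for every $j \in \Zee$. On the right, each of the coefficient sequences $(N_{j-4})_j$, $(N_{j-5})_j$, $(N_{j-3})_j$ and the subtracted term $(N_{j-2})_j$ individually satisfies $N_k = N_{k-1} + N_{k-3}$, so the whole linear combination does as well.

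Because a solution to a third-order linear recurrence is determined by any three consecutive values, it then suffices to verify the identity at three chosen values of $j$, and forward induction will give it for all larger $j$, while backward induction using $F(j-3) = F(j) - F(j-1)$ will give it for all smaller $j$. The cleanest choice of base cases is $j = 0, 1, 2$, since these are precisely covered by parts (d), (e), (f) of the preceding proposition.

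To carry this out, I would first tabulate the relevant Narayana numbers at negative indices (using the backward recurrence $N_{k-3} = N_k - N_{k-1}$ together with $N_{-1} = N_{-2} = 1$): explicitly, $N_{-3} = 0$, $N_{-4} = 0$, $N_{-5} = 1$. Substituting these into the right-hand side at $j = 0, 1, 2$ collapses the formula to $p_1(i+1) - 1$, $p_2(i+1) - 1$, and $p_0(i+1) + p_2(i+1) - 1$ respectively, which match parts (d), (e), (f) of the previous proposition on the nose.

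There is no real obstacle here; the only thing to be careful about is the bookkeeping for the negative-indexed $N_k$ and confirming that the backward extension of $z_{i,j}$ used in defining columns $-3, -2, -1$ is indeed the same three-term recurrence (which it is, since the {\tt colm$k$} definitions in the preceding subsection are precisely the equations $z_{i,j-3} = z_{i,j} - z_{i,j-1}$). As an alternative, one could simply feed the whole statement to {\tt Walnut}, defining a $j$-indexed automaton for the right-hand side via the same shifted-representation machinery used for {\tt col0}, {\tt col1}, {\tt col2}, and checking equality for small $|j|$, but the recurrence argument above avoids invoking the solver at all.
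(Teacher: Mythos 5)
Your proposal is correct and is essentially the paper's own argument: the paper's proof consists of exactly the "two easy inductions, one for positive indices and one for negative indices" that you carry out, anchored at the base cases $j=0,1,2$ supplied by parts (d)--(f) of the preceding proposition. Your computation of the negative-index Narayana numbers $N_{-3}=N_{-4}=0$, $N_{-5}=1$ and the resulting collapse of the right-hand side at the base cases checks out.
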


\begin{proof}
Two easy inductions, one for positive indices and one for negative indices.
\end{proof}

\subsection{Some additive number theory}

Let us do a little additive number theory with the values of
$p_0$, $p_1$, $p_2$, $p_{02}$, in analogy with what has
been done with the $2$-Zeckendorf array \cite{Phunphayap,Shallit:2022}.
Let $S,T \subseteq \Enn$ be sets of natural numbers.
Then by $S+T$ we mean the set $\{ s+t \suchthat s \in S,\ t \in T \}$.
Define the sets 
\begin{align*}
P_{02} &= \{ p_{02} (j) \suchthat j \geq 1 \} \\
P_{0} &= \{ p_{0} (j) \suchthat j \geq 1 \} \\
P_{1} &= \{ p_{1} (j) \suchthat j \geq 1 \} \\
P_{2} &= \{ p_{2} (j) \suchthat j \geq 1 \} .
\end{align*}
Then
\begin{theorem}
\leavevmode
\begin{itemize}
\item[(i)] For all $n \geq 4$ we have $n \in P_{02} + P_{02}$.
\item[(ii)] For all $n \geq 17$ we have $n \in P_0 + P_0$.
\item[(iii)] For all $n$ of the form $[(100)^i 100000]_N$, $i \geq 0$, we
have $n \not\in P_1 + P_1$.
\item[(iv)] For all $n \geq 27$ we have $n \in P_1 + P_1 + P_1$.
\item[(v)] For all $n$ of the form $[1 (00)^i 1]_N$, $i \geq 1$, we have
$n \not\in P_2 + P_2$.
\item[(vi)] For all $n \geq 140$ we have $n \in P_2 + P_2 + P_2$.
\end{itemize}
\end{theorem}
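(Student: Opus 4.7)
The plan is to dispatch each of the six parts by having \texttt{Walnut} verify a first-order sentence in the Narayana numeration system, exactly as was done throughout the paper. From Theorem~\ref{p012} I already possess automata \texttt{p0}, \texttt{p1}, \texttt{p2}, \texttt{p02} recognizing the graphs of the functions $p_0,p_1,p_2,p_{02}$. The first step is to distill from these the membership predicates for the sets $P_0,P_1,P_2,P_{02}$, via
\begin{verbatim}
def inp0  "?msd_nara Ej (j>=1 & $p0(j,x))":
def inp1  "?msd_nara Ej (j>=1 & $p1(j,x))":
def inp2  "?msd_nara Ej (j>=1 & $p2(j,x))":
def inp02 "?msd_nara Ej (j>=1 & $p02(j,x))":
\end{verbatim}
Since the Narayana system is addable, the sum relation $n=a+b$ and $n=a+b+c$ are automatic, so the sets $P_i+P_j$ and $P_i+P_j+P_k$ are all recognized by automata built from the above.

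For the four positive assertions (i), (ii), (iv), (vi), I would then simply evaluate
\begin{verbatim}
eval parti   "?msd_nara An (n>=4)   => Ea,b ($inp02(a) & $inp02(b) & n=a+b)":
eval partii  "?msd_nara An (n>=17)  => Ea,b ($inp0(a) & $inp0(b) & n=a+b)":
eval partiv  "?msd_nara An (n>=27)  => Ea,b,c
                  ($inp1(a) & $inp1(b) & $inp1(c) & n=a+b+c)":
eval partvi  "?msd_nara An (n>=140) => Ea,b,c
                  ($inp2(a) & $inp2(b) & $inp2(c) & n=a+b+c)":
\end{verbatim}
and expect \texttt{TRUE} from each. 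The numerical thresholds $4$, $17$, $27$, $140$ can be hardened, if needed, by first asking \texttt{Walnut} to produce the automaton accepting the exceptional $n$'s (drop the $n\geq c$ guard) and then inspecting finitely many accepting paths.

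For the two negative assertions (iii) and (v), the specified families of $n$ are regular languages over $\{0,1\}$ read in \texttt{msd\_nara} format. The patterns $(100)^i100000$ and $1(00)^i 1$ (for $i\geq 0$ resp.\ $i\geq 1$) are directly expressible, so I would issue
\begin{verbatim}
reg patiii msd_nara "(100)*100000":
reg patv   msd_nara "1(00)+1":
eval partiii "?msd_nara An $patiii(n) =>
                  ~Ea,b ($inp1(a) & $inp1(b) & n=a+b)":
eval partv   "?msd_nara An $patv(n) =>
                  ~Ea,b ($inp2(a) & $inp2(b) & n=a+b)":
\end{verbatim}
Again, both should return \texttt{TRUE}. (A small subtlety is to confirm that these strings are in canonical Narayana form --- i.e.\ they avoid the forbidden blocks $11$ and $101$ --- which is clear by inspection of both patterns.)

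The main obstacle I expect is purely computational: although each sentence has only a handful of quantifier alternations, the addition predicate is carried by the $250$-state adder, and we quantify over multiple summands lying in the relatively complicated sets $P_1$ and $P_2$. The automata for \texttt{p1} and \texttt{p2} derived from the lshift-based definitions in Theorem~\ref{p012} already have a nontrivial state count, so products like $P_1+P_1+P_1$ or $P_2+P_2+P_2$ may produce very large intermediate automata before minimization. If runtime or memory becomes prohibitive on parts (iv) and (vi), the remedy is to refactor the predicates via the closed forms $p_1(j)=[(j-1)_N000]_N+2$ and $p_2(j)=[(j-1)_N0000]_N+3$, avoiding the more expensive quantifier structure of the raw membership definition and letting the \texttt{lshift} regular expression do the work in hardware-friendly form.
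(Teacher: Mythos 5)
Your proposal is correct and is essentially the paper's own proof: the same membership predicates for $P_0,P_1,P_2,P_{02}$ built from the synchronized automata of Theorem~\ref{p012}, and the same \texttt{eval} sentences (with the same thresholds) for parts (i), (ii), (iv), (vi). The only difference is in (iii) and (v), where the paper computes the automata for $P_1+P_1$ and $P_2+P_2$ and verifies by hand that the paths labeled by the given patterns are nonaccepting, whereas you encode the patterns as regular expressions and let \texttt{Walnut} do that check automatically (just remember to prepend \texttt{0*} to each pattern to absorb leading-zero padding, as the paper does in its other \texttt{reg} commands); this is, if anything, a cleaner way to finish those two parts.
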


\begin{proof}
We use the following {\tt Walnut} code.
\begin{verbatim}
def s02 "?msd_nara En n>=1 & $p02(n,x)":
def s0 "?msd_nara En n>=1 & $p0(n,x)":
def s1 "?msd_nara En n>=1 & $p1(n,x)":
def s2 "?msd_nara En n>=1 & $p2(n,x)":
eval two_P02 "?msd_nara An (n>=4) => Ex,y $s02(x) & $s02(y)":
eval two_P0 "?msd_nara An (n>=17) => Ex,y $s0(x) & $s0(y)":
def two_P1 "?msd_nara Ex,y n=x+y & $s1(x) & $s1(y)":
eval three_P1 "?msd_nara An (n>=27) => Ex,y,z n=x+y+z & $s1(x) &
   $s1(y) & $s1(z)":
def two_P2 "?msd_nara Ex,y n=x+y & $s2(x) & $s2(y)":
eval three_P2 "?msd_nara An (n>=140) => Ex,y,z n=x+y+z & $s2(x) &
   $s2(y) & $s2(z)":
\end{verbatim}
The proofs of parts (i), (ii), (iv), and (vi) follow directly from the
results of {\tt Walnut}.  For the proofs of (iii) and (v) we inspect the
resulting automata.  The automaton for {\tt two\_P1} is
displayed in Figure~\ref{twoP1}.  For (iii) the path labeled $(100)^i 0010$ ends
in state $15$ and hence is nonaccepting for $i \geq 1$.
The automaton for {\tt two\_P2} is too large to display here, but
for (iv)
similarly the path labeled $10000100(0^i)1$ is nonaccepting for $i \geq 1$.
\end{proof}

\begin{figure}[htb]
\begin{center}
\includegraphics[width=6.5in]{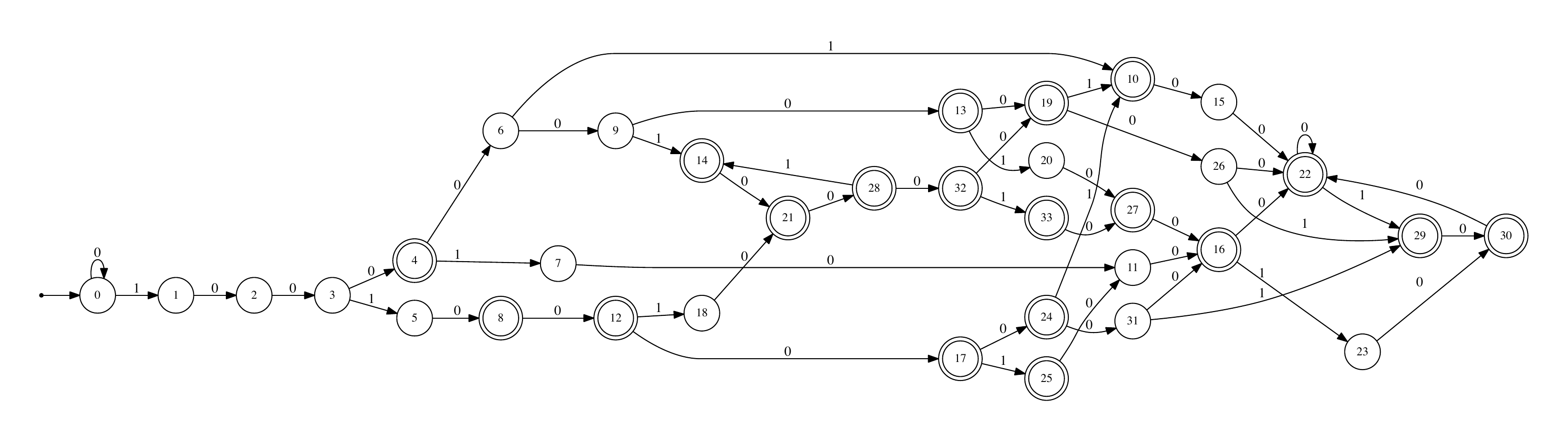}
\end{center}
\caption{Membership automaton for the set $P_1 + P_1$.}
\label{twoP1}
\end{figure}

\subsection{The Kimberling-Moses conjecture}
\label{kimosec}

Kimberling and Moses defined two increasing
sequences of positive integers $(a(j))_{j \geq 1}$ and
$(b(j))_{j\geq 1}$ that are complementary.
Here by ``complementary'' we mean these sequences have no values
in common, but considered together they take every positive integer
value exactly once.  Their classification depends on the suffix
of $(k)_N$:  
\begin{align}
\text{if $(k)_N$ ends in $1$ followed by $3s$ or $3s+2$ 
$0$'s, for $s \geq 0$, then $k$ belongs to $(a(j))_{j \geq 1}$;} \label{s32}\\
\text{if $(k)_N$ ends in $1$ followed by $3s+1$ $0$'s, for $s \geq 0$,
then $k$ belongs to $(b(j))_{j\geq 1}$. } \label{s012}
\end{align}
This is not quite
stated explicitly in their
paper, but follows immediately from the definition at the top of
p.~163 of \cite{Kimberling&Moses:2010}, and the discussion
in Section 3 of \cite{Kimberling:1995b}.
The sequence $(a(j))_{j \geq 1}$ is sequence
\seqnum{A136495} in the OEIS and the sequence $(b(j))_{j\geq 1}$
is sequence \seqnum{A136496}.
The first few values of these two sequences are given in 
Table~\ref{abtab}.
\begin{table}[htb]
\begin{center}
\begin{tabular}{c|ccccccccccccccccccccccccccccccccccc}
$j$ & 1& 2& 3& 4& 5& 6& 7& 8& 9&10&11&12&13&14&15&16&17&18&19\\
\hline
$a(j)$ & 1& 3& 4& 5& 7& 9&10&12&13&14&16&17&18&20&22&23&24&26&28\\
$b(j)$ & 2& 6& 8&11&15&19&21&25&27&30&34&36&39&43&47&49&52&56&60
\end{tabular}
\end{center}
\caption{The sequences $a(j)$ and $b(j)$.}
\label{abtab}
\end{table}

Note that the sequences $a(j)$ and $b(j)$ are {\it not\/} Beatty sequences,
although they are quite close to
$\lfloor \alpha j \rfloor$ and
$\lfloor \alpha^3 j \rfloor$, respectively.  So although they share the same
name as the functions studied in \cite[\S 4]{Ballot:2017}, they are
not the same.

Kimberling and Moses proved various results about these complementary
sequences, and also conjectured an inequality governing their asymptotic
behavior.  We prove their conjecture in what follows.

First we prove the following:
\begin{theorem}
Let $i \in \{0,1,2 \}$.
Then $k$ belongs to the sequence $(p_i (j))_{j \geq 1}$
if and only if there exists $s \geq 0$ such that
$(k)_N$ ends in $1$ followed by $3s+i$ $0$'s.
\label{cols}
\end{theorem}

\begin{proof}
We use the following {\tt Walnut} code:
\begin{verbatim}
reg end30 {0,1} "(0|1)*1(000)*":
reg end31 {0,1} "(0|1)*1(000)*0":
reg end32 {0,1} "(0|1)*1(000)*00":
eval check_p0 "?msd_nara Ak (k>=1) => ((Ej j>=1 & $p0(j,k)) <=> $end30(k))":
eval check_p1 "?msd_nara Ak (k>=1) => ((Ej j>=1 & $p1(j,k)) <=> $end31(k))":
eval check_p2 "?msd_nara Ak (k>=1) => ((Ej j>=1 & $p2(j,k)) <=> $end32(k))":
\end{verbatim}
And {\tt Walnut} returns {\tt TRUE} for all three.
\end{proof}
It turns out (and the observant reader will have already
noticed!) that the sequences $(p_{02} (j))_{j \geq 1}$ and
$(p_1 (j))_{j \geq 1}$ correspond, respectively, to the
sequences $a(j)$ and $b(j)$ in \cite{Kimberling&Moses:2010}.
We now prove this.
\begin{corollary}
We have
$a(j) = p_{02} (j)$ and $b(j) = p_1 (j)$ for $j \geq 1$.
\end{corollary}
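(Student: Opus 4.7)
The plan is to reduce the statement to a set-theoretic identity, then read it off from Theorem~\ref{cols} together with the definitions of $a,b$ and $p_{02}$. Since $a(j)$, $b(j)$, $p_{02}(j)$, and $p_1(j)$ are all defined as the strictly increasing enumerations of certain subsets of the positive integers, it suffices to prove the set identities
$$\{a(j) \suchthat j \geq 1 \} = \{ p_{02} (j) \suchthat j \geq 1\}
\quad\text{and}\quad
\{b(j) \suchthat j \geq 1 \} = \{p_1 (j) \suchthat j \geq 1 \}.$$

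First I would unpack the definitions of $p_{02}$ and $p_1$. By \eqref{p02p}, $p_{02}(j)$ enumerates positions where ${\bf n}'$ takes the value $0$ or $2$, so its range is exactly $\{p_0(j) \suchthat j\geq 1\} \cup \{p_2(j) \suchthat j\geq 1\}$ (and these two sets are disjoint, since a position of ${\bf n}'$ holds a unique symbol). Similarly the range of $p_1$ is the set of positions where ${\bf n}'$ takes value $1$.

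Now I would invoke Theorem~\ref{cols}, which characterizes each range via the Narayana representation: $k \in \{p_i(j) \suchthat j \geq 1\}$ iff $(k)_N$ ends in $1$ followed by exactly $3s+i$ zeros for some $s \geq 0$. Plugging $i=0$ and $i=2$ gives that the range of $p_{02}$ consists precisely of those $k$ such that $(k)_N$ ends in $1$ followed by $3s$ or $3s+2$ zeros; by \eqref{s32} this is exactly the set $\{a(j) \suchthat j \geq 1\}$. Plugging $i=1$ gives that the range of $p_1$ consists of those $k$ such that $(k)_N$ ends in $1$ followed by $3s+1$ zeros, which by \eqref{s012} is $\{b(j) \suchthat j \geq 1\}$.

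Since two strictly increasing sequences with the same range agree term by term, the two equalities $a(j)=p_{02}(j)$ and $b(j)=p_1(j)$ follow for all $j\geq 1$. There is no real obstacle here — the content of the corollary is entirely absorbed in Theorem~\ref{cols} and the definition of $p_{02}$; one could, if desired, also obtain the conclusion purely mechanically in {\tt Walnut} by asserting that for every $k \geq 1$, the predicates ``$(k)_N$ ends in $1$ followed by $3s$ or $3s+2$ zeros'' and ``$\exists j\geq 1$ with $p_{02}(j,k)$'' are equivalent (and similarly for $b,p_1$), using the regular expressions {\tt end30}, {\tt end31}, {\tt end32} already introduced in the proof of Theorem~\ref{cols}.
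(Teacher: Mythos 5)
Your proposal is correct and follows essentially the same route as the paper, which likewise derives the corollary by combining Theorem~\ref{cols} with the defining conditions \eqref{s32} and \eqref{s012}; you have simply made explicit the (easy) intermediate facts that the range of $p_{02}$ is the union of the ranges of $p_0$ and $p_2$ and that strictly increasing sequences with equal ranges coincide termwise.
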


\begin{proof}
This follows immediately from combining 
Eqs.~\eqref{s012} and \eqref{s32} with Theorem~\ref{cols}.
\end{proof}
The automata for $a(j) = p_{02} (j)$ and
$b(j) = p_1 (j)$ are given in Figures~\ref{afig} and \ref{bfig}.
\begin{figure}[htb]
\begin{center}
\includegraphics[width=6.5in]{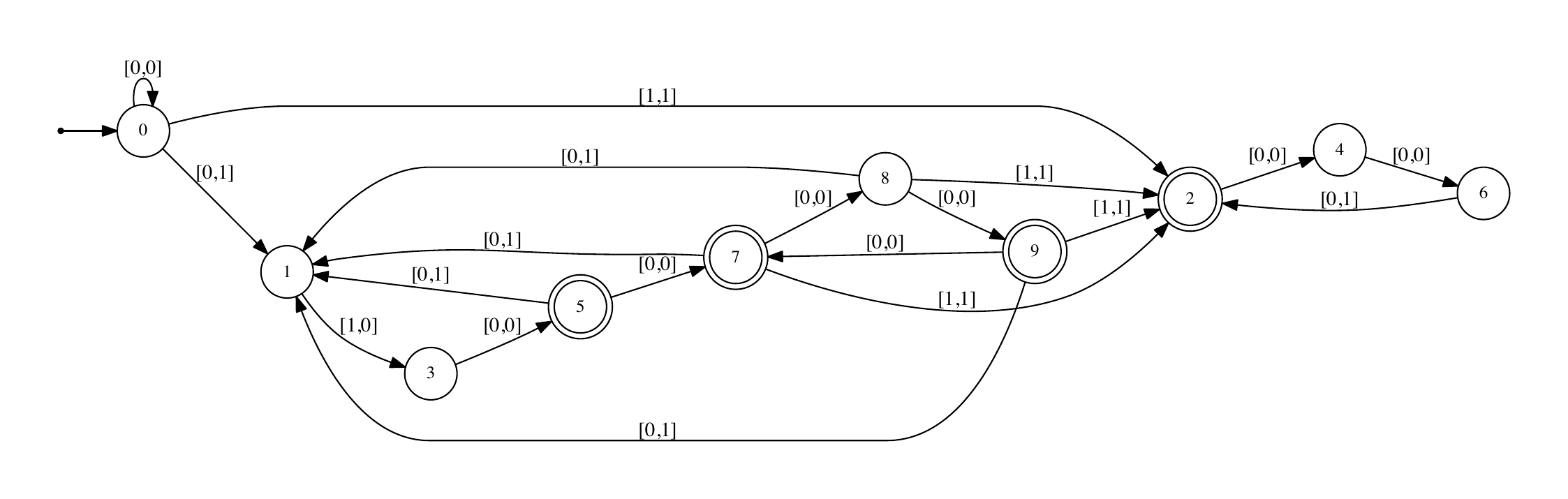}
\end{center}
\caption{Narayana automaton computing $a(i)$.}
\label{afig}
\end{figure}
\begin{figure}[htb]
\begin{center}
\includegraphics[width=6.5in]{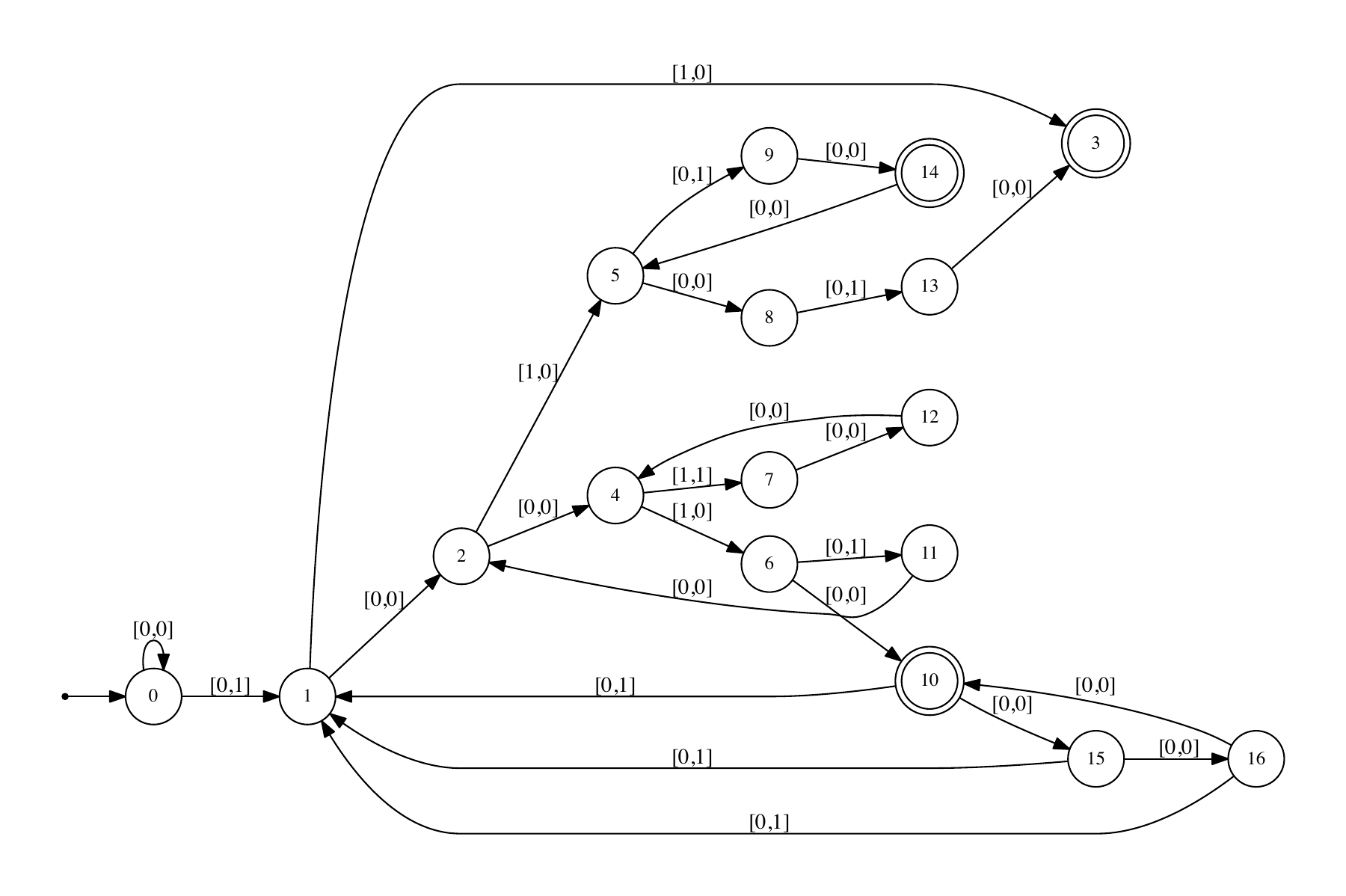}
\end{center}
\caption{Narayana automaton computing $b(i)$.}
\label{bfig}
\end{figure}

Furthermore, we can reprove the following relations
of Kimberling and Moses, linking the
sequences $a$ and $b$.
\begin{theorem}
We have
\begin{itemize}
\item[(i)] $b(j) = a(a(a(j))) + 1$;
\item[(ii)] $b(j) = a(a(j)) + j$;
\item[(iii)] $b(a(j)) = a(a(a(j))) + a(j)$;
\item[(iv)] $a(b(j)) = a(j) + b(j)$;
\item[(v)] $b(a(j)) = a(j) + b(j) - 1$.
\end{itemize}
\end{theorem}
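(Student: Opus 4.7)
The plan is to leverage the identification $a(j) = p_{02}(j)$ and $b(j) = p_1(j)$ from the preceding corollary so that each of the five identities becomes a statement directly about the automata $\mathtt{p02}$ and $\mathtt{p1}$ built for Theorem~\ref{p012}. Since each composition such as $a(a(a(j)))$ involves only three applications of a definable function, and the Narayana numeration system is addable, each identity can be phrased as a first-order sentence in Buchi arithmetic with no free variables, and checked by \texttt{Walnut}. For example, identity (i) would be verified with
\begin{verbatim}
eval kimo_i "?msd_nara Aj,x,y,z,w (j>=1 & $p02(j,x) & $p02(x,y) &
   $p02(y,z) & $p1(j,w)) => w=z+1":
\end{verbatim}
and identities (ii)--(v) are obtained by analogous expressions, introducing fresh existentially quantified witnesses for each composed evaluation of $p_{02}$ or $p_1$ and using the adder to compare the two sides.

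A parallel hand proof is also available and is useful as a sanity check. From Theorem~\ref{p012}, $p_{02}(j) = [(j-1)_N 0]_N + 1$. Since appending a $0$ to a valid (no $11$, no $101$) Narayana representation never introduces a forbidden block, we have $(p_{02}(j) - 1)_N = (j-1)_N\, 0$. Iterating this observation three times gives
$$p_{02}(p_{02}(p_{02}(j))) = [(j-1)_N\, 000]_N + 1 = p_1(j) - 1,$$
which is identity (i). Identity (ii) is the relation $p_1(j) = p_0(j) + j$ from \eqref{p1} together with the shift-by-two computation $p_{02}(p_{02}(j)) = [(j-1)_N\, 00]_N + 1 = p_0(j)$. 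Identities (iii)--(v) then follow by substituting $a(j)$ (whose Narayana representation ends in $0$) into (i) and (ii) and using the analogous shift identities; the only subtlety is tracking the constant offsets, e.g.\ $-1$ versus $0$, that arise depending on whether the final step goes through $p_{02}$ or $p_1$.

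The main obstacle is not conceptual but bookkeeping: the additive constants coming from the ``$+1$'', ``$+2$'' adjustments in Theorem~\ref{p012} have to be combined correctly across compositions, and one must confirm the base case $j=1$ (where $(j-1)_N$ is the empty string) causes no off-by-one error in the shift identities. For the \texttt{Walnut} verification this is automatic, since the automata for $p_0$, $p_1$, $p_{02}$ are already known to implement the correct functions for all $j \geq 1$, and the only residual worry is whether the composed formulas stay small enough for \texttt{Walnut} to handle; but all five sentences involve at most four chained function evaluations plus one addition, well within the scope of what the earlier computations in the paper have already demonstrated.
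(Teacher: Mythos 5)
Your proposal is correct and matches the paper's proof essentially verbatim: the paper defines {\tt a} and {\tt b} as aliases for {\tt p02} and {\tt p1} and verifies each of the five identities with a {\tt Walnut} {\tt eval}/{\tt def} of exactly the form you wrote for item (i). Your supplementary hand argument via the shift identities $(p_{02}(j)-1)_N=(j-1)_N\,0$ is a correct bonus not present in the paper, but the core method is the same.
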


\begin{proof}
We can use the following {\tt Walnut} code.
\begin{verbatim}
def a "?msd_nara $p02(j,x)":
def b "?msd_nara $p1(j,x)":
def item_i "?msd_nara Aj,x,y,z,w (j>=1 & $a(j,x) & $a(x,y) & $a(y,z) &
   $b(j,w)) => w=z+1":
def item_ii "?msd_nara Aj,x,y,z (j>=1 & $a(j,x) & $a(x,y) & $b(j,z)) 
   => z=y+j":
def item_iii "?msd_nara Aj,x,y,z,w (j>=1 & $a(j,x) & $a(x,y) & $a(y,z) &
   $b(x,w)) => w=z+x":
def item_iv "?msd_nara Aj,x,y,z (j>=1 & $a(j,x) & $b(j,y) & $a(y,z)) =>
   z=x+y":
def item_v "?msd_nara Aj,x,y,z (j>=1 & $a(j,x) & $b(j,y) & $b(x,z)) =>
   z+1=x+y":
\end{verbatim}
And {\tt Walnut} returns {\tt TRUE} for all of them.
\end{proof}

We now prove two inequalities conjectured by Kimberling and Moses:
\begin{theorem}
We have 
\begin{itemize}
\item[(i)] We have $-1.2630921 < a(i) - \alpha i < 0.58304372$ for all $i \geq 1$.
\item[(ii)] We have $-2.2480941 < b(i) - \alpha^3 i < 0.558039$ for all $i \geq 1$.
\end{itemize}
\label{kimo}
\end{theorem}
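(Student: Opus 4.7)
The plan is to reduce Theorem~\ref{kimo} to the bounds \eqref{bnd0} and \eqref{bnd000} already established for $[(i)_N 0]_N - \alpha i$ and $[(i)_N 000]_N - \alpha^3 i$. The key observation is that the Corollary just before Theorem~\ref{kimo} identifies $a(j) = p_{02}(j)$ and $b(j) = p_1(j)$, and then Theorem~\ref{p012} gives the closed forms $p_{02}(j) = [(j-1)_N 0]_N + 1$ and $p_1(j) = [(j-1)_N 000]_N + 2$. So everything has been prepared.

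For part (i), I would shift the index by one and write
$$a(i) - \alpha i \;=\; [(i-1)_N 0]_N + 1 - \alpha i \;=\; \bigl([(i-1)_N 0]_N - \alpha(i-1)\bigr) + (1-\alpha).$$
The parenthesized expression is bounded by \eqref{bnd0} with $i$ replaced by $i-1$ (valid since $i \geq 1$, so $i-1 \geq 0$). Adding the constant $1-\alpha \approx -0.46557123$ to both of the bounds $-0.79752082381$ and $1.04861494527$ gives the two stated constants $-1.2630921$ and $0.58304372$, to the displayed precision.

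For part (ii), the same maneuver with \eqref{bnd000} yields
$$b(i) - \alpha^3 i \;=\; \bigl([(i-1)_N 000]_N - \alpha^3(i-1)\bigr) + (2-\alpha^3),$$
and adding $2 - \alpha^3 \approx -1.14790252$ to the bounds $-1.10019497962$ and $1.70593793584$ from \eqref{bnd000} produces the asserted $-2.2480941$ and $0.558039$.

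There is no real obstacle here: the heavy numerical work has been done in the lemma supplying \eqref{bnd0}--\eqref{bnd000}. The only thing to check carefully is that the boundary case $i = 1$, which corresponds to $i-1 = 0$ and hence to the empty Narayana representation, is handled correctly — but $[(0)_N 0]_N = 0$ and $[(0)_N 000]_N = 0$, so both right-hand sides reduce to the constant terms $1-\alpha$ and $2-\alpha^3$, which lie strictly inside the claimed intervals. Thus the reduction is uniform in $i \geq 1$, and the theorem follows.
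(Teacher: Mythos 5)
Your proof is correct and follows exactly the paper's argument: identify $a = p_{02}$ and $b = p_1$ via the preceding corollary, substitute the closed forms $p_{02}(i) = [(i-1)_N 0]_N + 1$ and $p_1(i) = [(i-1)_N 000]_N + 2$ from Theorem~\ref{p012}, and shift the bounds \eqref{bnd0} and \eqref{bnd000} (applied at $i-1$) by the constants $1-\alpha$ and $2-\alpha^3$. One small numerical slip worth fixing: $2-\alpha^3 \doteq -1.1478990$, not $-1.14790252$; with your value the lower bound in (ii) would come out as about $-2.2480975$, which does \emph{not} imply the stated $-2.2480941$, whereas the accurate value gives about $-2.2480940$ and the stated constant checks out.
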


\begin{proof}
We use the strategy outlined in \cite{Dekking&Shallit&Sloane:2020}.

Recall that \eqref{x795} says that
$$
-0.79752082381 < [(i)_N 0]_N - \alpha i  < 1.04861494527.
$$
Now we use the fact that
$a(i) = p_{02} (i) = [(i-1)_N 0]_N + 1$.
Substituting in the above inequality, we get
$$ -0.79752082381 <  [(i-1)_N 0]_N - \alpha(i-1)  < 1.04861494527,$$
so
$$ -0.79752082381  <  (a(i) - 1) - \alpha(i-1)  < 1.04861494527,$$
and adding $1-\alpha$ to both sides finally gives
$$ -1.2630921 < a(i) - \alpha i < 0.58304372 .$$

\medskip

(ii) Exactly the same argument applies here.
From \eqref{bnd000} we get the result
$$ -1.10019497962 < [(i-1)_N 000]_N - \alpha^3 (i-1) < 1.70593793584 ,$$
and adding
$2- \alpha^3$ to both sides, we get
the desired result.
\end{proof}

\begin{remark}
A weaker inequality follows
from \cite[Theorem 1]{Dilcher:1993}.
\end{remark}

\subsection{Abelian properties}

We now study abelian properties of factors of $\bf n$.
Define the Parikh vector $\psi(w) = (|w|_0, |w|_1, |w|_2)$.
We say $x$ is an abelian $k$'th power if
$x = x_1 x_2 \cdots x_k$ for words $x_1, \ldots, x_k$,
where $\psi(x_1) = \psi(x_j)$ for $2 \leq j \leq k$.
The {\it order} of the abelian power is the length of
$x_1$.
For example, {\tt reappear} is an abelian square of order $4$.

Let us create automata {\tt pcount0}, {\tt pcount1}, 
{\tt pcount2} that count
the number of $0$'s (resp., $1$'s, $2$'s) 
in the prefix of $\bf n$ of length $i$:
\begin{verbatim}
def pcount0 "?msd_nara (i=0&x=0) | (i>=1 & Ey,z $p0(x,y) &
   $p0(x+1,z) & i>=y & i<z)":
def pcount1 "?msd_nara (i=0&x=0) | (i>=1 & Ey,z $p1(x,y) &
   $p1(x+1,z) & i>=y & i<z)":
def pcount2 "?msd_nara (i=0&x=0) | (i>=1 & Ey,z $p2(x,y) &
   $p2(x+1,z) & i>=y & i<z)":
\end{verbatim}

From these we can create automata that count the 
number of $0$'s (resp., $1$'s, $2$'s)
in ${\bf n}[i..i+m-1]$.
\begin{verbatim}
def count0 "?msd_nara Ex,y $pcount0(i,x) & $pcount0(i+m,y) 
   & z+x=y":
def count1 "?msd_nara Ex,y $pcount1(i,x) & $pcount1(i+m,y) 
   & z+x=y":
def count2 "?msd_nara Ex,y $pcount2(i,x) & $pcount2(i+m,y) 
   & z+x=y":
\end{verbatim}

Finally, we create an automaton that takes $i,j,m$ as input and
accepts if and only if $\psi({\bf n}[i..i+m-1]) = \psi({\bf n}[j..j+m-1])$.
\begin{verbatim}
def abeleq "?msd_nara Ex,y,z $count0(i,m,x) & $count0(j,m,x)
   & $count1(i,m,y) & $count1(j,m,y) & $count2(i,m,z) &
   $count2(j,m,z)":
\end{verbatim}

\begin{theorem}
There are abelian squares of all orders in $\bf n$.
\end{theorem}

\begin{proof}
We use {\tt Walnut}:
\begin{verbatim}
eval absquare "?msd_nara Am Ei $abeleq(i,i+m,m)":
\end{verbatim}
and it returns {\tt TRUE}.
\end{proof}

\begin{theorem}
There is a Narayana automaton of $117662$ states
that accepts the representation of those $m$ for which
there is an abelian cube factor of $\bf n$ of order $m$.
There are abelian cubes of arbitrarily large orders and
there are arbitrarily large orders with no abelian cubes.
\end{theorem}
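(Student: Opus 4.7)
The plan is to express ``$m$ is the order of an abelian cube factor of $\mathbf{n}$'' as a first-order formula and let {\tt Walnut} translate it into a Narayana automaton. Reusing the {\tt abeleq} predicate built above, the natural encoding is
\begin{verbatim}
def abcube "?msd_nara Ei $abeleq(i,i+m,m) & $abeleq(i+m,i+2*m,m)":
\end{verbatim}
which asserts the existence of a position $i$ at which three consecutive length-$m$ factors share a common Parikh vector. Feeding this to {\tt Walnut} and letting it minimize in the usual way yields the DFA whose state count is the claimed $117662$.

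To handle the two asymptotic claims I would evaluate the sentences
\begin{verbatim}
eval inf_cubes "?msd_nara AM Em (m>M) & $abcube(m)":
eval inf_nocubes "?msd_nara AM Em (m>M) & ~$abcube(m)":
\end{verbatim}
Because {\tt abcube} is now a Narayana automaton, each of these sentences reduces to the standard decidable question of whether a regular language (respectively, its complement) is infinite, which amounts to detecting a cycle on a path from the initial state to a final state in the appropriate DFA. I expect {\tt Walnut} to return {\tt TRUE} in both cases. Alternatively, one can inspect {\tt abcube} directly and extract explicit regular sublanguages of its accepted and rejected languages, exhibiting concrete infinite families of orders, in the spirit of the treatment of $P_1 + P_1$ and $P_2 + P_2$ earlier in the paper.

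The main obstacle here is computational rather than conceptual. Building {\tt abcube} requires composing {\tt abeleq} with itself and existentially projecting the starting position; given that even the minimized DFA has more than $10^5$ states, the intermediate automata produced during {\tt Walnut}'s determinization and minimization are likely to be substantially larger still, and memory and running time will be the binding constraint. Once {\tt abcube} is available, however, the two infiniteness claims are routine {\tt Walnut} verifications and no additional ideas are needed.
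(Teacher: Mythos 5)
Your proposal is correct and follows essentially the same route as the paper: the paper defines {\tt abscube} via {\tt Ei \$abeleq(i,i+m,m) \& \$abeleq(i,i+2*m,m)} (logically equivalent to your pairing of consecutive blocks) and obtains the $117662$-state automaton. The only cosmetic difference is in the last step: rather than asking {\tt Walnut} the unboundedness sentences directly, the paper exhibits the explicit regular families $0^*(100)^*1001$ (all accepted) and $0^*(100)^*00001$ (all rejected) and verifies the inclusions with {\tt Walnut} --- precisely the alternative you mention, and both are routine.
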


\begin{proof}
We use {\tt Walnut}:
\begin{verbatim}
def abscube "?msd_nara Ei $abeleq(i,i+m,m) & $abeleq(i,i+2*m,m)":
# 117662 states - 234256ms
reg examples msd_nara "0*(100)*1001":
eval large_abelian_cubes "?msd_nara An $examples(n) => $abscube(n)":
reg counterex msd_nara "0*(100)*00001":
eval large "?msd_nara An $counterex(n) => ~$abscube(n)":
\end{verbatim}

\end{proof}

\begin{remark}
There are abelian cubes in $\bf n$ of the following
orders:  $3, 4, 5, 6, 7, 9, 10, 13, 15, 17, 18, 19,\ldots$.
There are no abelian cubes of the following orders:
$1, 2, 8, 11, 12, 14, 16, 20, 21, 27, \ldots $.
\end{remark}

% use 
% eval tmp "?msd_nara Ei At (t<8) => $abeleq(i,i+19*t,19)"::
% to check existence of abelian 8-powers of period 19
% eval tmp "?msd_nara Ei At (t<7) => $abeleq(i,i+13*t,13)"::
% eval tmp "?msd_nara Ei At (t<6) => $abeleq(i,i+9*t,9)"::
% eval tmp "?msd_nara Ei At (t<5) => $abeleq(i,i+6*t,6)"::
% eval tmp "?msd_nara Ei At (t<4) => $abeleq(i,i+4*t,4)"::
% eval tmp "?msd_nara Ei At (t<9) => $abeleq(i,i+28*t,28)"::

%def tmp "?msd_nara At (t<4) => $abeleq(i,i+4*t,4)"::
%14 states
%smallest i 100000010

%def tmp "?msd_nara At (t<5) => $abeleq(i,i+6*t,6)"::
%11 states
%smallest i 1001000100

%def tmp "?msd_nara At (t<6) => $abeleq(i,i+9*t,9)"::
%26 states
%smallest i 1000001001

%def tmp "?msd_nara At (t<7) => $abeleq(i,i+13*t,13)"::
%13 states
%smallest i 100000010010

%def tmp "?msd_nara At (t<8) => $abeleq(i,i+19*t,19)"::
%40 states
%smallest i 100

\subsection{Balance}

Recall that a sequence is $k$-balanced if for all equal-length factors
$x$ and $y$ and all letters $a$ we have $\left| |x|_a - |y|_a \right|
\leq k$.
Clearly $\bf n$ is not $2$-balanced, which we see by 
looking at the factors $00120010120010$ and $12012001012012$.

\begin{theorem}
The sequence $\bf n$ is $3$-balanced.
\end{theorem}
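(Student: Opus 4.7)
The plan is to express 3-balance directly as a first-order statement in Narayana arithmetic and then invoke {\tt Walnut}. All of the machinery is already in place: the automata {\tt count0}, {\tt count1}, {\tt count2} constructed in the preceding subsection accept $(i,m,z)$ precisely when $z=|{\bf n}[i..i+m-1]|_a$ for $a=0,1,2$ respectively.

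First, for each letter $a\in\{0,1,2\}$ I would run a {\tt Walnut} query of the form
\begin{verbatim}
eval bal3_0 "?msd_nara Ai,j,m,x,y ($count0(i,m,x) & $count0(j,m,y))
   => (x<=y+3 & y<=x+3)":
\end{verbatim}
and analogously for {\tt count1} and {\tt count2}. Each is the literal first-order rendering of the statement ``for all equal-length factors, the number of occurrences of the letter $a$ differs by at most $3$'', so if {\tt Walnut} returns {\tt TRUE} on all three commands, the theorem follows. For optimality, the observation immediately preceding the theorem already exhibits equal-length factors whose letter counts differ by exactly $3$, so no smaller constant works.

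The main obstacle I anticipate is not logical correctness---Narayana representation is Pisot and addable, so the first-order theory is decidable and the procedure is guaranteed to terminate---but the state-count blowup. The counting automata {\tt count0}, {\tt count1}, {\tt count2} are already nontrivial, and existentially eliminating $x,y$ and then universally quantifying over $i,j,m$ produces intermediate products whose minimization can be expensive in time and memory. If the monolithic query is too large to run in one shot, a natural fallback is to build, for each letter separately, an automaton for the ``count difference'' relation asserting $|{\bf n}[i..i+m-1]|_a = |{\bf n}[j..j+m-1]|_a + d$ for some $d$, minimize it, and then read off that only the values $d\in\{0,1,2,3\}$ occur (symmetry in $i,j$ covers negative differences). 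In either formulation the proof is essentially a single {\tt Walnut} call per letter.
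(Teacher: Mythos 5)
Your proposal matches the paper's proof essentially verbatim: the paper also runs one {\tt Walnut} query per letter of the form {\tt Ai,j,n,x,y (\$count0(i,n,x) \& \$count0(j,n,y)) => \dots} using the same {\tt count0}, {\tt count1}, {\tt count2} automata, and all three return {\tt TRUE}. Your rendering of the conclusion as {\tt x<=y+3 \& y<=x+3} is in fact the cleaner literal translation of $3$-balance (the paper's printed disjunction only works because of the symmetry in $i,j$), so there is nothing to add.
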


\begin{proof}
We use the following {\tt Walnut} code.
\begin{verbatim}
def bal30 "?msd_nara Ai,j,n,x,y ($count0(i,n,x) & $count0(j,n,y)) 
   => (x>=y+3|y<=x+3)":
def bal31 "?msd_nara Ai,j,n,x,y ($count1(i,n,x) & $count1(j,n,y)) 
   => (x>=y+3|y<=x+3)":
def bal32 "?msd_nara Ai,j,n,x,y ($count2(i,n,x) & $count2(j,n,y)) 
   => (x>=y+3|y<=x+3)":
\end{verbatim}
All commands return {\tt TRUE}.
\end{proof}

%\begin{verbatim}
%def bal30 "?msd_nara Ai,j,n,x,y ($count0(i,n,x) & $count0(j,n,y)) => (x>=y+2|y<=x+2)":
%# sanity check, returns to FALSE
%\end{verbatim}

%\begin{remark}
%This theorem doesn't follow immediately from 
%Theorem~\ref{p012}
%because there is no guarantee that, for example,
%$(j-1)_N 01$ is a legal expansion.
%\end{remark}

\section{Other sequences based on Narayana representation}

Several sequences previously discussed in the literature can
be defined in terms of the Narayana representation of $i$.

For example, let the Narayana representation of $i$
be $e_1\cdots e_t$.  Then define
\begin{equation}
h(i) = [e_1 \cdots e_{t-1}]_N + e_t. \label{heq}
\end{equation}
The first few values of this sequence are given in Table~\ref{htab}.
\begin{table}[H]
\begin{center}
\begin{tabular}{c|cccccccccccccccccccccccccc}
$j$ & 0 & 1& 2& 3& 4& 5& 6& 7& 8& 9&10&11&12&13&14&15&16&17&18&19\\
\hline
$h(j)$ & 0& 1& 1& 2& 3& 4& 4& 5& 5& 6& 7& 7& 8& 9&10&10&11&12&13&13
\end{tabular}
\end{center}
\caption{First few values of the $h$ sequence.}
\label{htab}
\end{table}

We can create an automaton to compute $h$ as follows.
We need
an automaton {\tt rshift} which takes two arguments that are
binary strings, $x$ and $y$, and accepts if $y$ is
the number given by the representation of $x$, dropping the last bit.
It can be created in {\tt Walnut} using a regular expression.

We also need an automaton to decide if the last bit is $1$.
Again, we can compute this using a regular expression.
\begin{verbatim}
reg rshift {0,1} {0,1} "([0,0]|[1,0][1,1]*[0,1])*(()|[1,0][1,1]*)":
reg lastbit1 msd_nara "(0|1)*1":
def h "?msd_nara Ex $rshift(i,x) & ((z=x+1 & $lastbit1(i)) |
   (z=x & ~$lastbit1(i)))":
\end{verbatim}

This gives the automaton in Figure~\ref{hpic}.
\begin{figure}[htb]
\begin{center}
\includegraphics[width=6.5in]{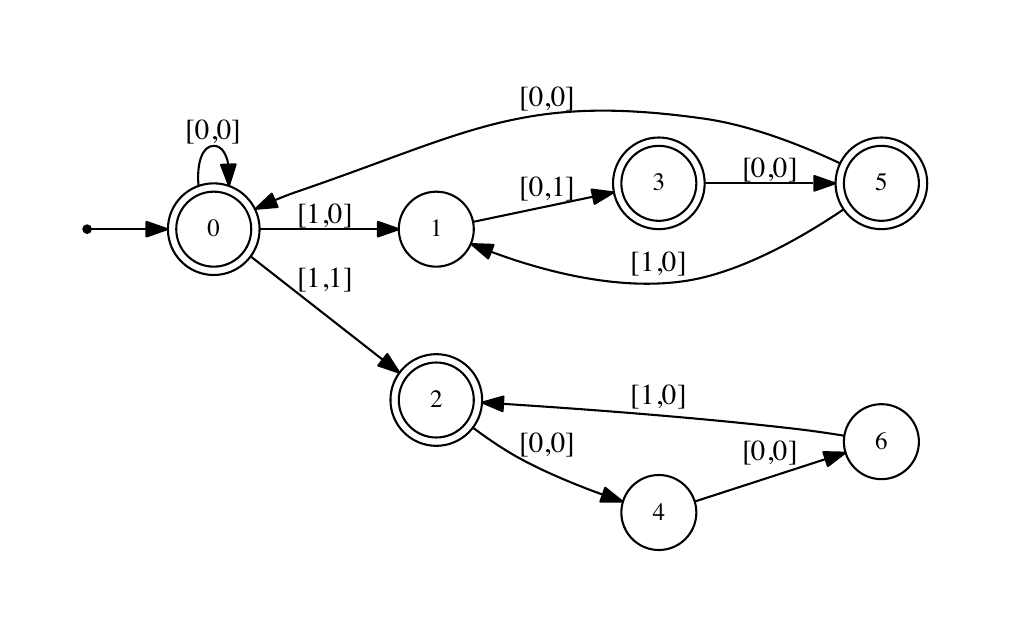}
\end{center}
\caption{Narayana automaton computing $h(i)$.}
\label{hpic}
\end{figure}

When we look up this sequence in the OEIS, we find an apparent match
with sequence \seqnum{A005374}, the so-called Hofstadter $H$-sequence
(see \cite[p.~137]{Hofstadter:1979}).
This sequence is defined by the relation $H(0) = 0$ and
$H(i) = i - H(H(H(i-1)))$.  
\begin{proposition}
We have $h(i) = H(i)$ for all $i \geq 0$.
\label{hprop}
\end{proposition}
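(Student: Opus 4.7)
The plan is to prove the proposition by showing that the function $h$ defined by \eqref{heq} satisfies the same initial value and recurrence as Hofstadter's $H$-sequence. Since the recurrence $H(i) = i - H(H(H(i-1)))$ together with $H(0)=0$ uniquely determines $H$ on $\Enn$ (the value of $H(i)$ depends only on values at strictly smaller indices, by a straightforward induction), it suffices to verify:
\begin{align*}
h(0) &= 0,\\
h(i) &= i - h(h(h(i-1))) \qquad \text{for } i \geq 1.
\end{align*}
The first equality is immediate from \eqref{heq}, since $(0)_N = \epsilon$.

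For the recurrence, my plan is to delegate the verification to \texttt{Walnut}. We already have the Narayana automaton \texttt{h} that accepts pairs $(i,z)$ with $z = h(i)$. The key observation is that the recurrence $h(i) + h(h(h(i-1))) = i$ is a first-order statement over the Narayana numeration system, involving a constant number of compositions of $h$ together with addition and subtraction; since the system is addable, this formula can be decided mechanically. Concretely, I would run something like
\begin{verbatim}
eval hofcheck "?msd_nara Ai,w (i>=1 & $h(i,w)) =>
   Eu,x,y,z u+1=i & $h(u,x) & $h(x,y) & $h(y,z) & w+z=i":
\end{verbatim}
and confirm that \texttt{Walnut} returns \texttt{TRUE}. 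Together with the trivial check that $h(0)=0$, this establishes $h=H$ by induction on $i$.

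The main risk I anticipate is state blow-up: composing \texttt{h} with itself three times, then intersecting with the addition relation, can produce a large intermediate automaton. However, the automaton for $h$ (Figure~\ref{hpic}) is modest in size, and the quantifier structure is just a bounded block of existential quantifiers over integers with one universal over $i$, so this should lie well within \texttt{Walnut}'s comfortable range. If size becomes an issue, a fallback is to prove the recurrence by hand: split on the last digit(s) of $(i-1)_N$ (since \eqref{heq} is transparent about what $h$ does in terms of those digits), compute $h(i-1)$, $h(h(i-1))$, $h(h(h(i-1)))$ case by case, and verify the arithmetic using the identity $N_j = N_{j-1} + N_{j-3}$ and the no-\texttt{11}/no-\texttt{101} constraint on canonical Narayana expansions. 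But I expect the automated check to succeed outright and to be the clean way to finish the proof.
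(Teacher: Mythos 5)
Your proposal is correct and takes essentially the same approach as the paper: the paper's entire proof is a single \texttt{Walnut} command (\texttt{hcheck}) verifying $i = h(i) + h(h(h(i-1)))$ for $i\geq 1$, which is logically equivalent to your \texttt{hofcheck}. Your explicit observation that the recurrence together with $H(0)=0$ uniquely determines $H$ (so that the automated check suffices) is left implicit in the paper but is the right justification.
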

\begin{proof}
\leavevmode
\begin{verbatim}
eval hcheck "?msd_nara Ai,x,y,z,w (i>=1 & $h(i-1,x) & $h(x,y) &
   $h(y,z) & $h(i,w)) => i=w+z":
\end{verbatim}
And {\tt Walnut} returns {\tt TRUE}.
\end{proof}

\begin{remark}
Proposition~\ref{hprop} was proven, in much greater
generality, by Meek and Van Rees \cite{Meek&VanRees:1984}.
\end{remark}

In 2002, Benoit Cloitre \cite[\seqnum{A005374}]{Sloane:2025}
conjectured an inequality for $H(i)$, 
which we can now prove using Narayana representations.
\begin{theorem}
We have $H(i) - \lfloor \alpha^{-1} i \rfloor
\in  \{ 0, 1 \}$.   
\end{theorem}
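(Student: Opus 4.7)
The plan is to convert the combinatorial description of $h$ in \eqref{heq} into a real-valued bound on $h(i) - i/\alpha$ that is sharp enough to leave only two integer values in the floor difference. This is essentially the strategy used in the proof of Theorem~\ref{kimo}, transported from $a(i) \approx \alpha i$ to $h(i) \approx i/\alpha$.

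First I would translate \eqref{heq} into an arithmetic identity relating $i$, $h(i)$, and the right-shift of its Narayana representation. Writing $(i)_N = e_1 \cdots e_t$, setting $e := e_t \in \{0,1\}$ and $j := [e_1 \cdots e_{t-1}]_N$, the canonical representation of $j$ is exactly $e_1 \cdots e_{t-1}$ (the forbidden blocks cannot be created by chopping off the last digit), so a direct unpacking of the sum \eqref{sum} gives
$$[(j)_N 0]_N = \sum_{k=1}^{t-1} e_k N_{t-k} = i - e_t N_0 = i - e,$$
while \eqref{heq} gives $h(i) = j + e$, i.e., $j = h(i) - e$. Substituting both of these into the bound \eqref{bnd0} applied at $j$ in place of $i$ yields
$$-0.79752 + (1-\alpha)e \;<\; i - \alpha\, h(i) \;<\; 1.04861 + (1-\alpha)e.$$

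Next I would divide through by $\alpha$ and take the worst case over $e \in \{0,1\}$ (using $1-\alpha < 0$) to conclude $h(i) - i/\alpha \in (-0.716,\, 0.863)$. Adding the fractional part $\{i/\alpha\} \in [0,1)$ then yields
$$h(i) - \lfloor i/\alpha \rfloor \;=\; \bigl(h(i) - i/\alpha\bigr) + \{i/\alpha\} \;\in\; (-0.716,\, 1.863),$$
and since the left-hand side is an integer, it must be $0$ or $1$. The edge case $i = 0$ is immediate from $h(0) = 0 = \lfloor 0/\alpha \rfloor$.

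The main obstacle is purely bookkeeping: carefully tracking the $\alpha$-factors when passing from the bound on $i - \alpha h(i)$ to a bound on $h(i) - i/\alpha$, verifying the identity $[(j)_N 0]_N = i - e$ in the corner case $t = 1$ (where $j = 0$ and $e = 1$, so both sides vanish), and confirming that the final window $(-0.716,\, 1.863)$ lies strictly inside $(-1,\, 2)$ for both parities of $e$. No new ideas beyond \eqref{bnd0} and \eqref{heq} are required.
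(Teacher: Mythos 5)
Your proposal is correct and follows essentially the same route as the paper: both apply the bound \eqref{x795} (i.e., \eqref{bnd0}) to the right-shift $j$ of $(i)_N$, convert it into the two-sided bound $h(i)-\alpha^{-1}i \in (-0.716,\,0.862)$ (your parameter $e\in\{0,1\}$ is just the paper's two cases according to the last digit of $(i)_N$), and finish by adding the fractional part of $\alpha^{-1}i$ and invoking integrality. The numerical window you obtain matches the paper's $(-0.7154992,\,0.86184283)$ up to harmless rounding, so nothing further is needed.
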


\begin{proof}
There are two cases:  when $(i)_N$ ends in $0$ and when it ends in $1$.

\medskip

$(i)_N$ ends in $0$:  Write $(i)_N = (j)_N 0$ for some $j$.
Thus $h(i) = j$.
Recall that \eqref{x795} says
$-0.79752082381 < [(j)_N 0]_N - \alpha j < 1.04861494527$.
Divide by $-\alpha$ to get
$$ -0.7154992 < j - \alpha^{-1} [(j)_N 0]_N < 0.5441707.$$
Now replace $j$ by $h(i)$ and $[(j)_N 0]_N$ by $i$, to get
\begin{equation}
-0.7154992 < h(i) - \alpha^{-1} i < 0.5441707.
\label{case1}
\end{equation}

\medskip

$(i)_N$ ends in $1$: Write $(i)_N = (j)_N 1$ for some $j$.
Thus $h(i) = j+1$.
Add $1$ to Eq.~\eqref{x795} to get
$$ 0.20247917 < [(j)_N 1]_N  - \alpha j < 2.04861495;$$
divide by $-\alpha$ to get
$$ -1.397827 < j - \alpha^{-1} [(j)_N 1]_N < -0.13815717.$$
Now replace $j$ by $h(i)-1$ and $[(j)_N 1]_N$ by $i$  and add $1$ to get
\begin{equation}
 -0.397827 < h(i) - \alpha^{-1} i < 0.86184283.
\label{case2}
\end{equation}

Putting together Eqs.~\eqref{case1} and \eqref{case2}, we get
\begin{equation}
-0.7154992 < h(i) - \alpha^{-1} i < 0.86184283.
\label{bounds1}
\end{equation}
Write $\alpha^{-1} i = \lfloor \alpha^{-1} i \rfloor + r$,
where $0 \leq r < 1$. Then substituting gives
$$ -0.7154992 < h(i) - (\lfloor \alpha^{-1} i \rfloor + r) < 0.86184283 ,$$
and adding $r$ gives
$$  -0.7154992 \leq r-0.7154992 < h(i) - \lfloor \alpha^{-1} i \rfloor 
< r+0.86184283 < 1.86184283 .$$
Since $h(i) - \lfloor \alpha^{-1} i \rfloor$ is an integer, this
gives $h(i) - \lfloor \alpha^{-1} i \rfloor \in \{ 0,1 \}$,
as desired.
\end{proof}

\begin{remark}
Dilcher \cite{Dilcher:1993} obtained an inequality similar
to \eqref{bounds1}, but weaker.
Cloitre's conjecture has recently been proven, independently,
by Letouzey \cite{Letouzey:2025}.
\end{remark}

\begin{remark} 
Cloitre conjectures in \seqnum{A082401} that
$\sum_{1 \leq i \leq t} (h(i)- \lfloor \alpha^{-1} i \rfloor) \sim
Ct$ for some constant $C$, approximately $0.53$.
The methods in this paper do not seem strong enough to prove this.
\end{remark}

%Symbol density?

%\begin{figure}[htb]
%\begin{center}
%\includegraphics[width=5in]{hh.pdf}
%\end{center}
%\caption{Narayana automaton for the Hofstadter $H$-sequence.}
%\end{figure}

\begin{proposition}
The function $H$ obeys the rule $H(i) = i - H(H(H(i-1)))$.
\end{proposition}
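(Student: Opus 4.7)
The plan is to observe that, in view of Proposition~\ref{hprop}, the stated relation is essentially the defining recurrence of Hofstadter's $H$-sequence from \cite{Hofstadter:1979}. In fact the identity $h(i) = i - h(h(h(i-1)))$ for $i \geq 1$ was already verified as the content of the \texttt{hcheck} command evaluated inside the proof of Proposition~\ref{hprop}, which returned \texttt{TRUE}. So from the perspective of this paper there is really nothing new to prove; one just has to unpack the \texttt{hcheck} assertion and translate it into the language of $H$ via the previously established equality $h = H$.

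More concretely, the first step is to recall that \texttt{hcheck} asserts the universally quantified statement $i = h(i) + h(h(h(i-1)))$ for all $i \geq 1$, which is exactly the desired recurrence for $h$ after rearrangement. The second step is to note that $(0)_N = \epsilon$, so formula \eqref{heq} gives $h(0) = 0 = H(0)$. The third step is simply to invoke Proposition~\ref{hprop}: since $h(i) = H(i)$ for all $i \geq 0$, substituting $H$ for $h$ in the rearranged identity produces exactly $H(i) = i - H(H(H(i-1)))$ for $i \geq 1$, while the case $i = 0$ is vacuous.

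There is no substantive obstacle and no new \texttt{Walnut} computation is needed; the entire proof is a one-line appeal to the already-performed \texttt{hcheck} evaluation together with the identification $h = H$. If one prefers a more self-contained presentation, one could re-run an analogous \texttt{Walnut} command of the form \texttt{Ai,x,y,z,w (i>=1 \& \$h(i-1,x) \& \$h(x,y) \& \$h(y,z) \& \$h(i,w)) => i=z+w} and simply cite its \texttt{TRUE} output, but this is redundant with what has already been done.
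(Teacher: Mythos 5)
Your proposal is correct and takes essentially the same route as the paper: the paper's own proof just re-runs the identical {\tt Walnut} assertion under the name {\tt testid} (the same formula as {\tt hcheck}, with the conjuncts reordered), so your observation that the verification is redundant with the earlier {\tt hcheck} evaluation is accurate. Transferring the identity from $h$ to $H$ via the already-established equality $h = H$ is exactly the step the paper implicitly relies on as well.
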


\begin{proof}
We use the {\tt Walnut} command
\begin{verbatim}
eval testid "?msd_nara Ai,x,y,z,w (i>=1 & $h(i,w) & $h(i-1,x) &
   $h(x,y) & $h(y,z)) => w+z=i":
\end{verbatim}
which returns {\tt TRUE}.
\end{proof}

We now examine some sequences related to $H$.  Examination of
Table~\ref{htab} suggests that
\begin{itemize}
\item[(a)] Every positive integer appears as a value of $H$;
\item[(b)] No positive integer appears $3$ or more times as a value
of $H$.
\end{itemize}
We can prove this as follows:
\begin{verbatim}
def every "?msd_nara An Ex $h(x,n)":
# every natural number appears
def nothree "?msd_nara ~En,x,y,z x<y & y<z & $h(n,x) & $h(n,y) & $h(n,z)":
# no number appears three times
\end{verbatim}

Sequence \seqnum{A202340} counts the number of times $i\geq 0$ appears as a value
of $H$.  We can find a DFAO {\tt S} for it as follows:
\begin{verbatim}
def two "?msd_nara Ex,y x<y & $h(x,n) & $h(y,n)":
# automaton for two appearances
def one "?msd_nara ~$two(n)":
combine S two=2 one=1:
\end{verbatim}
The resulting DFAO appears in Figure~\ref{figS}.
\begin{figure}[htb]
\begin{center}
\includegraphics[width=6in]{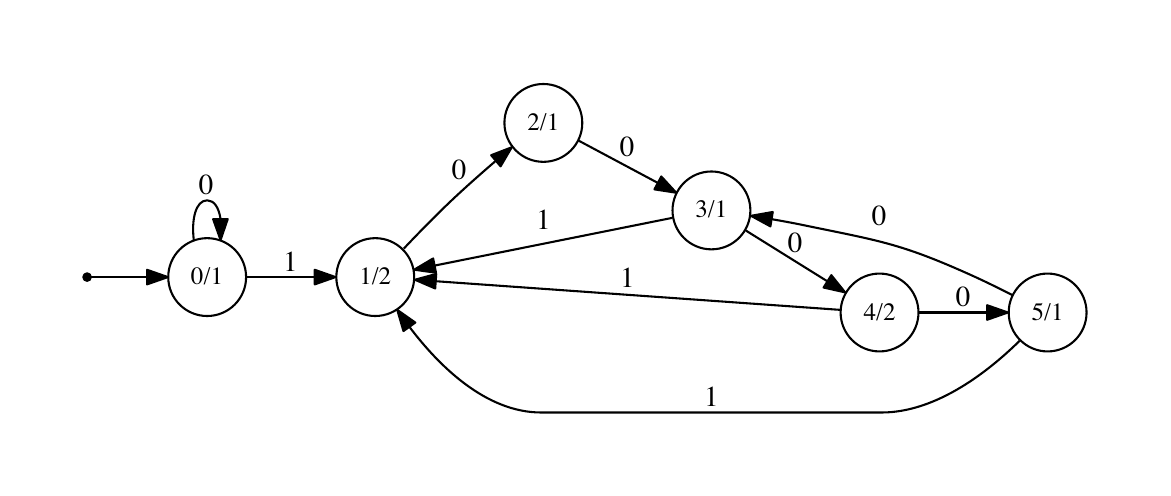}
\end{center}
\caption{Narayana automaton {\tt S} for A202340.}
\label{figS}
\end{figure}

Sequence \seqnum{A202341}, namely $0,2,3,6, 8,9,\ldots$
consists of those $i$ for which
$S[i]=1$, and \seqnum{A202342}, namely
$1, 4,5,7,10, \ldots$ consists of those $i$ for which
$S[i]=2$.
We can trivially construct a $6$-state automaton {\tt one} (resp., {\tt two})
determining membership in \seqnum{A202341} (resp., \seqnum{A202342})
from the automaton for $S$:
\begin{verbatim}
def one "?msd_nara S[i]=@1":
def two "?msd_nara S[i]=@2":
\end{verbatim}
If, however, we want a {\it synchronized automaton\/}
that accepts $i$ and $y$ in parallel and accepts
if and only if $y$ is the $i$'th term of the appropriate
sequence, there is no obvious way to get this from $S$.
Instead, for both, we {\it guess\/} the automaton and then verify its
correctness.  (Note that our indexing begins at $0$, in contrast
to the way the sequences are indexed in the OEIS; this results
in slightly simpler automata.)
\begin{table}[H]
\begin{center}
\begin{tabular}{c|cccccccccccccccccccccccccc}
$i$ & 0& 1& 2& 3& 4& 5& 6& 7& 8& 9&10&11&12&13&14&15&16\\
\hline
\seqnum{A202340}($i$) & 1& 2& 1& 1& 2& 2& 1& 2& 1& 1& 2& 1& 1& 2& 2& 1& 1\\
\seqnum{A202341}($i+1$) & 0& 2& 3& 6& 8& 9&11&12&15&16&19&21&22&25&27&28&30 \\
\seqnum{A202342}($i+1$) & 1& 4& 5& 7&10&13&14&17&18&20&23&24&26&29&32&33&35 
\end{tabular}
\end{center}
\caption{First few values of \seqnum{A202340}, \seqnum{A202341},
\seqnum{A202342}.}
\end{table}

The correctness of {\tt a202341} and {\tt a202342} is verified as follows:
\begin{verbatim}
eval increasing1 "?msd_nara An,x,y ($a202341(n,x) & $a202341(n+1,y)) => x<y":
eval correct1 "?msd_nara An (Ex $a202341(x,n)) <=> $one(n)":
eval increasing2 "?msd_nara An,x,y ($a202342(n,x) & $a202342(n+1,y)) => x<y":
eval correct2 "?msd_nara An (Ex $a202342(x,n)) <=> $two(n)":
\end{verbatim}

The Narayana automaton {\tt a202341} has 44 states,
too large to show here.  But
the Narayana automaton {\tt a202342} has only 6 states and is depicted in
Figure~\ref{a202}.
\begin{figure}[htb]
\begin{center}
\includegraphics[width=6in]{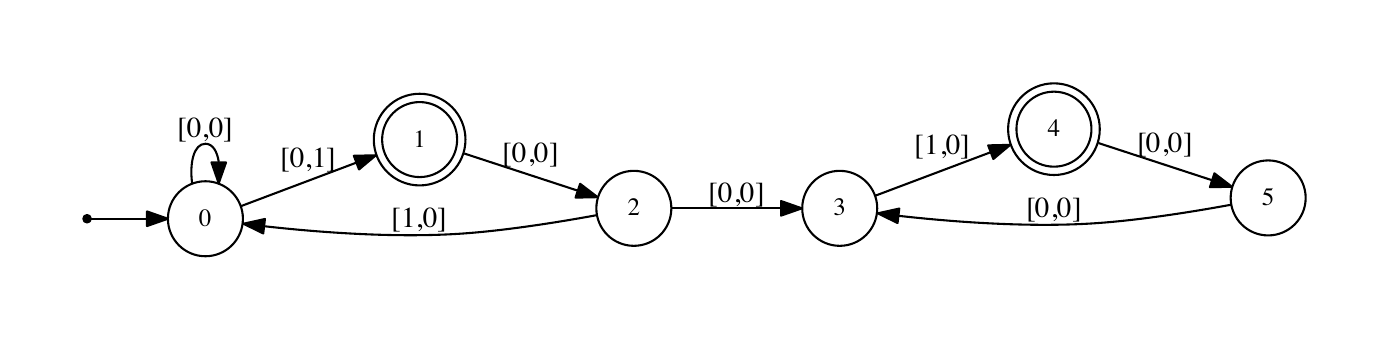}
\end{center}
\caption{Narayana automaton {\tt a202342}.}
\label{a202}
\end{figure}

The reader may have already noticed that ${\tt a202342}$ is essentially the
same as the sequence $p_0(n)$.  We can prove this as follows:
\begin{verbatim}
eval check_a "?msd_nara An,x (n>=1) => ($p0(n,x) <=> $a202342(n-1,x))":
\end{verbatim}
Using this observation, we can now prove a conjecture of Sean
Irvine stated in \seqnum{A020942}.
\begin{theorem}
We have $\seqnum{A202342}(n) + n-1 = \seqnum{A020942}(n)$ for $n \geq 1$.
\end{theorem}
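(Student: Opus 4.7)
The plan is to chain together results already established earlier in the excerpt. By the {\tt check\_a} verification just above the theorem statement, we have $\seqnum{A202342}(n) = p_0(n)$ for $n \geq 1$. From Section~\ref{zeck3}, combining the identification $(z_{i,0})_{i \geq 0} = \seqnum{A020942}$ with part (d) of the proposition there, $z_{i,0} = p_1(i+1) - 1$, we obtain $\seqnum{A020942}(n) = p_1(n) - 1$ after the natural reindexing to $n \geq 1$. Thus the conjecture reduces to showing $p_0(n) + n - 1 = p_1(n) - 1$, i.e.\ $p_1(n) = p_0(n) + n$.

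But this is precisely equation~\eqref{p1} of Theorem~\ref{p012}, which has already been verified in {\tt Walnut}. Putting the three identities together gives
$$\seqnum{A202342}(n) + n - 1 \;=\; p_0(n) + n - 1 \;=\; p_1(n) - 1 \;=\; \seqnum{A020942}(n),$$
as required.

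If one preferred a more direct route that bypasses the $3$-Zeckendorf identification with \seqnum{A020942}, the plan would instead be to construct a synchronized Narayana automaton {\tt a020942} (guessed and then verified to be increasing and to enumerate the correct set, exactly as was done for {\tt a202342}), and then ask {\tt Walnut} to confirm
\begin{verbatim}
eval irvine "?msd_nara An,x,y (n>=1 & $a202342(n-1,x)
   & $a020942(n,y)) => y+1=x+n":
\end{verbatim}
Since all the ingredients are already synchronized Narayana automata, I do not expect any step to be an obstacle; the only mild subtlety is keeping the two different indexing conventions (paper's $i \geq 0$ versus OEIS's $n \geq 1$) straight when translating $z_{i,0}$ into $\seqnum{A020942}(n)$.
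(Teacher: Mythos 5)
Your proposal is correct and follows essentially the same route as the paper: both reduce the claim via $\seqnum{A202342}(n)=p_0(n)$ and $\seqnum{A020942}(n)=p_1(n)-1$ to the identity $p_1(n)=p_0(n)+n$. The paper simply re-verifies that identity with a fresh {\tt Walnut} command ({\tt irvine}), which is literally the same check as the earlier {\tt p1\_test3} you cite, so your observation that it is already established is a minor tidying rather than a different argument.
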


\begin{proof}
As observed previously in Section~\ref{zeck3},
we have $\seqnum{A020942}(n) = p_1(n) - 1$
for $n \geq 1$.
Hence the following {\tt Walnut} code verifies the claim.
\begin{verbatim}
eval irvine "?msd_nara An,x,y (n>=1 & $p0(n,x) & $p1(n,y)) => x+n=y":
\end{verbatim}
\end{proof}

\begin{theorem}
The critical exponent of the sequence \seqnum{A202340} is
$\beta = (\alpha^2 + \alpha + 5)/3 \doteq 2.871156755860518460227649$,
where
$\alpha \doteq 1.465571231876768026656731$ is the real root of $X^3=X^2 + 1$.
\end{theorem}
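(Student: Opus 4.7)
The plan is to follow exactly the template used in the proof of Theorem~\ref{crit}, substituting the sequence $S$ (sequence \seqnum{A202340}) in place of $\bf n$. Since $S$ is computed by a Narayana DFAO (the automaton depicted in Figure~\ref{figS}), every factor-equality, periodicity, and critical-exponent construction performed with {\tt NA} in {\tt Walnut} can be carried out with $S$ in its place.

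Concretely, I would first define a factor-equality automaton {\tt Sef} taking $(i,j,m)$ that accepts iff $S[i..i+m-1] = S[j..j+m-1]$, mirroring {\tt naraef}. From this I would construct the analogues {\tt S\_isaper}, {\tt S\_per}, {\tt S\_lp}, {\tt S\_max}, and finally {\tt S\_bignm}, which accepts pairs $(m,p)$ for which the shortest period of any length-$m$ factor is $p$, every longer factor has period strictly greater than $p$, and $m/p > 14/5$. The critical exponent is then the supremum of $m/p$ over accepted pairs.

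Inspecting the output automaton {\tt S\_bignm}, I expect --- and this is where the real work lies --- to find that the accepted pairs decompose into a small number of infinite families whose members are indexed by $n$ and given by sums of Narayana numbers $N_{4n+k}$ for a few residues $k$, exactly as in the analysis following Figure~\ref{naramax}. For each family, I would apply Lemma~\ref{a4b} to rewrite the arithmetic sums of Narayana numbers in closed form involving $N_{4n+k}$, $N_{4n+k+1}$, and $N_{4n+k+2}$, and then use the estimates \eqref{n1}--\eqref{n3} to show that each ratio is bounded above by $(\alpha^2+\alpha+5)/3$ but converges to it from below as $n \to \infty$. Since the same limit emerges for every family, the critical exponent of $S$ equals $\beta$.

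The main obstacle is unlikely to be the {\tt Walnut} computation itself but rather the structural analysis of {\tt S\_bignm}. Because $S$ takes values only in $\{1,2\}$, its repetition structure may not align perfectly with that of $\bf n$, and the resulting automaton could present accepting families with a different skeleton than the one exhibited in Figure~\ref{naramax}, even though the limiting ratio coincides. If the families do not present themselves in the clean regular-expression form that lets Lemma~\ref{a4b} apply immediately, I would instead obtain numerical upper and lower bounds on $m/p$ by running {\tt Walnut} to evaluate witnessing inequalities of the form $p \cdot 3^k > m \cdot (3^k\beta - \varepsilon)$ for explicit small $\varepsilon$, which is enough to pin down $\beta$ once I know it is algebraic of small degree. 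In either case, the conclusion is the same numerical value as for the critical exponent of $\bf n$.
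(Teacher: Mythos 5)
Your proposal matches the paper's proof, which consists precisely of the instruction to repeat the technique of Theorem~\ref{crit} on the sequence $S$: build the factor-equality automaton {\tt sef}, then the chain of period automata culminating in an analogue of {\tt bignm} (a $27$-state automaton), and analyze its accepted pairs via Lemma~\ref{a4b} and the estimates for $N_{i+j}-\alpha^j N_i$ to see the ratios converge to $(\alpha^2+\alpha+5)/3$ from below. No further comment is needed.
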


\begin{proof}
Can be proved using the same technique as we used for
Theorem~\ref{crit}.
\begin{verbatim}
def sef "?msd_nara Au,v (u>=i & u<i+n & u+j=v+i) => S[u]=S[v]":
# 171 states
def s_isaper "?msd_nara p>0 & p<=m & $sef(i,i+p,m-p)":
# 1628 states
def s_per "?msd_nara $s_isaper(i,m,p) & Aq (q<p) => ~$s_isaper(i,m,q)":
# 882 states
def s_lp "?msd_nara Ei $s_per(i,m,p) & Aj,q $s_per(j,m,q) => q>=p":
# 50 states
def s_max "?msd_nara $s_lp(m,p) & Aq (q>m) => ~$s_lp(q,p)":
# 32 states
def s_bignm "?msd_nara $s_max(m,p) & 5*m>14*p":
\end{verbatim}
This gives an automaton of $27$ states, for which we can apply
the same techniques as before.
\end{proof}

\begin{theorem}
The subword complexity of the sequence \seqnum{A202340} is
$2n$ for $n \geq 1$.
\end{theorem}

\begin{proof}
Can be proved using the same technique as we used for
Theorem~\ref{sc}.
\begin{verbatim}
def novel_s n "?msd_nara Aj (j<i) => ~$sef(i,j,n)":
# 137 states
def a2n n "?msd_nara (i=0&n=0)|(n>0&i<2*n)":
# 127 states
\end{verbatim}
We then subtract the resulting linear representations, minimize
them, and verify the result is $0$.
\end{proof}

Another interesting sequence is the location of the first occurrence
of $j\geq 0$ in the sequence $(H(i))_{i \geq 0}$, that is,
$0,1,3,4,5,7,\ldots$.  We can define a synchronized automaton
for this sequence as follows:
\begin{verbatim}
def firstocc "?msd_nara $h(x,n) & ~$h(x-1,n)":
\end{verbatim}
%This gives a $10$-state automaton depicted in Figure~\ref{a136495}.
%\begin{figure}[htb]
%\begin{center}
%\includegraphics[width=6.5in]{a136495.pdf}
%\end{center}
%\caption{Narayana automaton {\tt firstocc}.}
%\label{a136495}
%\end{figure} 
The reader may have already guessed that this sequence is
essentially the same as $(a(n))_{n \geq 1}$ discussed
in Section~\ref{pos-subsec}.
We can prove this as follows:
\begin{verbatim}
eval check_same "?msd_nara An,x (n>=1) => ($firstocc(n,x) <=> $a(n,x))":
\end{verbatim}

\section{The Allouche-Johnson infinite word}

Allouche and Johnson \cite{Allouche&Johnson:1996} defined a certain
infinite word ${\bf aj} = 011110 \cdots$ as the limit of a
sequence of infinite words:  $X_{-2} = X_{-1} = X_0 = 0$
and $X_n = X_{n-1} \overline{X_{n-3}}$ for $n \geq 1$.
Here the overline represents binary complement:
$\overline{0} = 1$, $\overline{1} = 0$.
In this section we prove some properties of this word.

It is easy to verify that ${\bf aj}$ is the parity of the number of 
$1$'s in the Narayana representation of $n$.  We can therefore 
compute a DFAO for it, as follows:
\begin{verbatim}
reg odd1 {0,1} "0*(10*10*)*10*":
def ja "?msd_nara $odd1(n) & n>=0":
combine JA ja=1:
\end{verbatim}
It is depicted in Figure~\ref{japic}.
\begin{figure}[htb] 
\begin{center}
\includegraphics[width=5.5in]{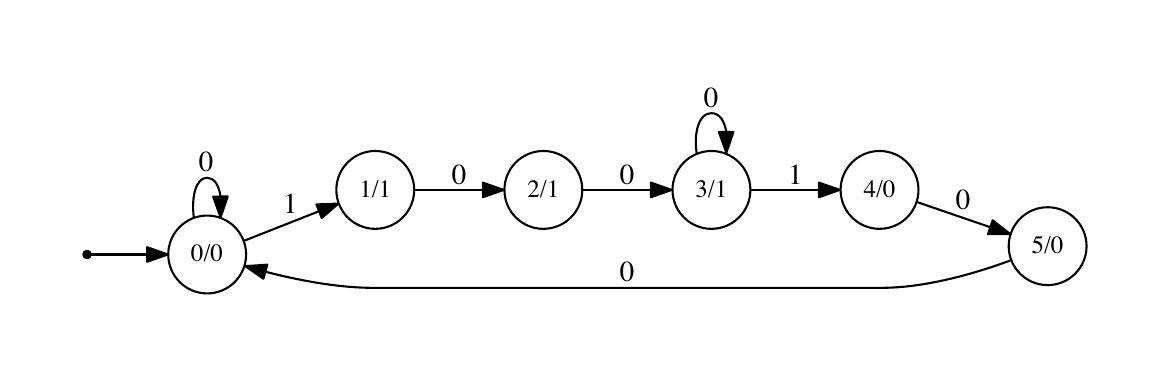}
\end{center}
\caption{Narayana DFA computing $\bf ja$.}
\label{japic}
\end{figure}

We need to be able to determine if two equal-length
factors are equal.
We'd like to compute this using the usual
\begin{verbatim}
def jaef "?msd_nara Au,v (u>=i & u<i+n & u+j=v+i) => JA[u]=JA[v]":
\end{verbatim}
but it fails because of array overflow
after {\tt Walnut} creates an automaton of more than 100 million states.

Instead we ``guess'' the automaton {\tt jaef} from empirical data,
associating states with strings over $\{0,1\}^3$ and saying two
states $p,q$ are equivalent if the result on $px$ and $qx$ agree
for all strings of length $\leq 7$.  This automaton has 258 states.

We now verify that our guessed automaton is correct.  We can do
this by checking that
\begin{itemize}
\item[(a)] ${\tt jaef}(i,j,0)$ is {\tt TRUE} for all $i$ and $j$;
\item[(b)] if ${\tt jaef}(i,j,n)$ holds, then ${\tt jaef}(i,j,n+1)$ is
{\tt TRUE} iff ${\tt JA}[i+n] = {\tt JA}[j+n]$;
\item[(c)] if ${\tt jaef}(i,j,n+1)$ holds then
${\tt jaef} (i,j,n)$ holds.
\end{itemize}
Verifying all three of these
then constitutes a proof by induction on $n$ that
{\tt jaef} is correct.
\begin{verbatim}
eval jaef_correct1 "?msd_nara Ai,j $jaef(i,j,0)":
eval jaef_correct2 "?msd_nara Ai,j,n $jaef(i,j,n) =>
   ($jaef(i,j,n+1) <=> JA[i+n]=JA[j+n])":
eval jaef_correct3 "?msd_nara Ai,j,n $jaef(i,j,n+1) =>
   $jaef(i,j,n)":
\end{verbatim}

Define $f(n)$ to be the smallest period of $x$ over all factors $x$ of
length $n$.
Recall that the {\it asymptotic critical exponent} of a sequence
is defined to be
$$ \limsup_{n \geq 1} {n \over {f(n)}}.$$

\begin{theorem}
The critical exponent of $\bf ja$ is $4$, and is attained only at
the two factors $0000$ and $1111$.

The asymptotic critical exponent of $\bf ja$ is $2$.  In fact,
all factors of length $n$ and period $p$ satisfy the inequality
$n \leq 2p+2$, and there exist arbitrarily large factors
where $n = 2p+2$.
\end{theorem}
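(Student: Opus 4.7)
The plan is to imitate the Walnut strategy used in Theorem~\ref{crit}. Starting from the $258$-state automaton {\tt jaef} already proved correct, I would build the standard chain
\begin{verbatim}
def ja_isaper "?msd_nara p>0 & p<=m & $jaef(i,i+p,m-p)":
def ja_per "?msd_nara $ja_isaper(i,m,p) & Aq (q<p) =>
   ~$ja_isaper(i,m,q)":
def ja_lp "?msd_nara Ei $ja_per(i,m,p) & Aj,q $ja_per(j,m,q) => q>=p":
def ja_max "?msd_nara $ja_lp(m,p) & Aq (q>m) => ~$ja_lp(q,p)":
\end{verbatim}
so that {\tt ja\_per}$(i,m,p)$ asserts that $p$ is the least period of ${\bf ja}[i..i+m-1]$, {\tt ja\_lp}$(m,p)$ asserts that $p$ is the least period over all length-$m$ factors of $\bf ja$, and {\tt ja\_max}$(m,p)$ picks out the extremal pairs where no longer factor has period at most $p$.

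For the first assertion, I would ask Walnut for the pairs $(m,p)$ with $m \geq 4p$:
\begin{verbatim}
def ja_ce "?msd_nara Ei $ja_per(i,m,p) & m>=4*p":
\end{verbatim}
Inspection of the output should yield only $(m,p)=(4,1)$, with the two witnesses $0000$ and $1111$ both occurring in the initial segment ${\bf ja}=011110\cdots$; a further Walnut evaluation then confirms that no factor satisfies $m>4p$, so the critical exponent is exactly $4$ and is attained only by those two factors. For the asymptotic part I would split the claim in two. First, the upper bound
\begin{verbatim}
eval jabnd "?msd_nara Ai,m,p $ja_per(i,m,p) => m<=2*p+2":
\end{verbatim}
should return {\tt TRUE}, giving $n \leq 2p+2$ for every factor of length $n$ and period $p$, and therefore asymptotic critical exponent at most $2$. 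For the lower bound, I would examine the automaton produced by
\begin{verbatim}
def ja_family "?msd_nara $ja_max(m,p) & m=2*p+2":
\end{verbatim}
read off a regular expression over $\{0,1\}^2$ describing an infinite cyclic family of accepting pairs $(m,p)$, and verify in Walnut that every pair matched by this regex satisfies both ${\tt ja\_lp}(m,p)$ and $m=2p+2$. Taking $p\to\infty$ along this family gives factors with $n/f(n) \to 2$, matching the upper bound.

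The main obstacle I anticipate is computational. The analogue {\tt nara\_isaper} for $\bf n$ already had $882$ states, and the fact that {\tt jaef} itself had to be guessed rather than built directly suggests that {\tt ja\_isaper} and {\tt ja\_per} may run into the same memory bottleneck that blocked the naive construction of {\tt jaef}. If so, the same guess-and-verify strategy should apply: produce candidates empirically and certify each against its defining formula by induction on $m$ inside Walnut, exactly as was done to justify {\tt jaef}. Once these certified automata exist, the remaining work is mechanical: inspecting a small output automaton and writing down one explicit regular family realizing the $m=2p+2$ bound.
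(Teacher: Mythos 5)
Your proposal is correct in substance and follows the same basic strategy as the paper (build a period predicate on top of the certified {\tt jaef} automaton, then settle each claim by a {\tt Walnut} query), but the paper takes a noticeably leaner route. It defines only the ``is-a-period'' predicate {\tt japer}$(i,n,p)$ (4457 states, computed directly --- the memory bottleneck you worry about does not in fact recur here) and never constructs the least-period chain {\tt ja\_per}, {\tt ja\_lp}, {\tt ja\_max} at all. All four claims are phrased as first-order statements about {\tt japer}: the query {\tt jack0} shows that $n\ge 4p$ forces $p=1$; {\tt jack1} shows no factor has $n>2p+2$; {\tt jack2} shows $n=2p+2$ is achieved; and {\tt jack3} directly asserts ``for all $m$ there exist $i,n,p$ with $n>m$, $n=2p+2$, and $p$ a period,'' which gives the arbitrarily-large-factors claim in one shot with no need to extract and verify a regular family from an inspected automaton. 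Your extra machinery is logically sound --- e.g., the bound $n\le 2p+2$ over least periods is equivalent to the bound over all periods, and your {\tt ja\_family} is nonempty for arbitrarily large $p$ precisely because {\tt jack1} forces any length-$(2p+2)$ factor to have least period exactly $p$ --- but it is exactly the part most likely to blow up in states, and it buys nothing here since every claim in the theorem is already expressible with an unqualified period. One small gap on your side: you assert that both witnesses $0000$ and $1111$ occur in the initial segment $011110\cdots$, but that prefix contains no $0000$; you would still need a separate (trivial) {\tt Walnut} check that $0000$ occurs somewhere in $\bf ja$ to justify ``attained at both factors.''
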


\begin{proof}
We use the following {\tt Walnut} code.
\begin{verbatim}
def japer "?msd_nara p>0 & p<=n & $jaef(i,i+p,n-p)":
# 4457 states
eval jack0 "?msd_nara Ai,p,n (p>0 & n>=4*p & $japer(i,n,p)) => p=1":
# returns TRUE
eval jack1 "?msd_nara ~Ei,n,p $japer(i,n,p) & n>2*p+2":
# returns TRUE
eval jack2 "?msd_nara ~Ei,n,p $japer(i,n,p) & n>2*p+1":
# returns FALSE
eval jack3 "?msd_nara Am Ei,n,p (n>m) & $japer(i,n,p) & n=2*p+2":
# arbitrarily large factors of length n and period p, n=2p+2
\end{verbatim}
\end{proof}

We now turn to the subword complexity of $\tt ja$; it is
rather complicated.

\begin{theorem}
There are two $31$-state automata {\tt fdiff10} and {\tt fdiff12}
that accept exactly those
$n$ for which $\rho_{\bf ja} (n+1) - \rho_{\bf ja} (n) = 10$ (resp., $12$).
Every $n\geq 4$ has one of these two possibilities.
\label{scja}
\end{theorem}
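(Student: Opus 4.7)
The plan is to follow the subword-complexity strategy already used in the proof of Theorem~\ref{sc}. Using the previously verified {\tt jaef}, I would first define a ``novel factor'' predicate {\tt novel\_ja} accepting $(n,i)$ iff the length-$n$ factor of ${\bf ja}$ starting at position $i$ does not occur at any earlier position. Read with $n$ as parameter and $i$ as counted free variable, the resulting automaton has a linear representation whose value at $n$ equals $\rho_{\bf ja}(n)$. The first-difference $n \mapsto \rho_{\bf ja}(n+1)-\rho_{\bf ja}(n)$ could then be obtained by shifting and subtracting this linear representation; however, since ${\bf ja}$ is binary, it is cleaner to use the standard identity $\rho_{\bf ja}(n+1)-\rho_{\bf ja}(n) = |\mathrm{RS}(n)|$, where $\mathrm{RS}(n)$ denotes the set of right-special length-$n$ factors, and build instead a ``first occurrence of a right-special factor'' automaton that accepts $(n,i)$ iff $i$ is the first position at which a length-$n$ factor extending in two different ways appears.

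Next I would apply the Berstel--Reutenauer minimization machinery that the paper already invokes for Theorem~\ref{sc} to the linear representation coming from this automaton, and produce candidate automata {\tt fdiff10} and {\tt fdiff12}. In practice the cleanest route is to guess {\tt fdiff10} and {\tt fdiff12} by tabulating the first several dozen values of $\rho_{\bf ja}(n+1)-\rho_{\bf ja}(n)$, and then ask {\tt Walnut} to verify (a) that for every $n \geq 4$ exactly one of the two automata accepts $n$, and (b) that whenever {\tt fdiff10} (resp.\ {\tt fdiff12}) accepts, the minimized linear representation of the first-difference function evaluates to $10$ (resp.\ $12$). This reduces the whole statement to a finite comparison of linear representations, after which the 31-state bounds are read off by {\tt Walnut}'s state minimizer.

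The hard part, I expect, will be size control. The paper has already observed that the direct first-order construction of {\tt jaef} blew past $10^8$ states and had to be replaced by a guessed automaton, verified inductively. Layering a novelty quantifier and a right-special test on top of {\tt jaef} is very likely to run into the same wall, so the intermediate automaton for first occurrences of right-special factors will almost certainly have to be guessed from empirical data and verified by the same three-step inductive scheme used earlier for {\tt jaef}: base case at $n=0$, one-step extension using the letter {\tt JA}$[i+n]$, and monotonicity in $n$. Once correctness of this intermediate automaton is established, the minimization, the constant comparisons against $10$ and $12$, and the covering check for $n \geq 4$ are short {\tt Walnut} computations that should not present further difficulty.
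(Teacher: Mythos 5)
Your proposal matches the paper's proof essentially step for step: build {\tt novel\_ja} and a right-special predicate on top of the already-verified {\tt jaef}, guess {\tt fdiff10} and {\tt fdiff12} from data, encode the claimed difference function as a second linear representation, and show the two linear representations agree by subtracting and minimizing to zero. The only divergence is your anticipated need to guess and inductively verify the intermediate right-special automaton; in fact once {\tt jaef} is in hand the direct constructions stay small (217 and 128 states in the paper), so that extra machinery is not needed.
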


\begin{proof}
We guess the automata {\tt fdiff10} and {\tt fdiff12} 
from empirical data and then
verify they are correct, as follows:

The first difference $\rho_{\bf ja} (n+1) - \rho_{\bf ja} (n)$
is easily seen to be the number of right-special factors of length
$n$ of $\bf ja$, so we can count these using a first-order
predicate for right-special factors similar to the one we used for $\bf n$.
A linear representation counting them is computed by the following
{\tt Walnut} code.  We then subtract the linear representation
so obtained from the one specified by the formula. When we minimize
the resulting linear representation, it minimizes to $0$, so the
two functions are the same.

\begin{verbatim}
def novel_ja "?msd_nara Aj (j<i) => ~$jaef(i,j,n)":
# 217 states
def jars n "?msd_nara $novel_ja(i,n) & Ej $jaef(i,j,n) & JA[i+n]!=JA[j+n]":
# 128 states
# right-special
def guess n "?msd_nara (n=0&i<1)|(n=1&i<2)|(n=2&i<4)|(n=3&i<8)|
   (n>=4&$fdiff10(n) &i<10)|(n>=4&$fdiff12(n)&i<12)":
# 95 states
\end{verbatim}
\end{proof}

We can also do a little additive number theory with {\bf ja}.
\begin{theorem}
Let $J_i = \{ n \geq 0 \suchthat {\bf ja}[n] = i \}$
for $i \in \{0,1 \}$.
Then every integer $\geq 10$ is the sum of two integers
in $J_0$, and every integer $\geq 2$ is the sum of two
integers in $J_1$.
\end{theorem}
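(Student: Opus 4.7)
The plan is to follow the additive number theory template from the earlier subsection on sums of the sets $P_i$ and let \texttt{Walnut} do the work. Since $\mathbf{ja}$ is given by the Narayana DFAO \texttt{JA}, membership in $J_i$ is the one-state condition $\texttt{JA[n]=@}i$. Because the Narayana numeration system is addable (via the $250$-state adder from Section~4), the ternary relation $n=x+y$ is recognizable, so both statements of the theorem are purely first-order formulas in Büchi arithmetic over the Narayana system and hence decidable.

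First I would issue the two \texttt{eval} commands
\begin{verbatim}
eval sumJ0 "?msd_nara An (n>=10) => Ex,y n=x+y & JA[x]=@0 & JA[y]=@0":
eval sumJ1 "?msd_nara An (n>=2)  => Ex,y n=x+y & JA[x]=@1 & JA[y]=@1":
\end{verbatim}
and expect \texttt{Walnut} to return \texttt{TRUE} for both.

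To justify the specific thresholds $10$ and $2$ (and to present the proof in a self-checking form rather than merely quoting the bounds), I would also build the membership automata for the sumsets themselves,
\begin{verbatim}
def twoJ0 "?msd_nara Ex,y n=x+y & JA[x]=@0 & JA[y]=@0":
def twoJ1 "?msd_nara Ex,y n=x+y & JA[x]=@1 & JA[y]=@1":
\end{verbatim}
and inspect them.  The non-accepting Narayana representations of \texttt{twoJ0} (resp., \texttt{twoJ1}) form a finite set, and reading off the largest element confirms that the bounds $10$ and $2$ are sharp; one can then restate the theorem in the style of items (iii) and (v) of the $P_i + P_j$ theorem if desired.

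The only foreseeable obstacle is automaton size: each formula composes \texttt{JA} with two applications of the Narayana adder, and intermediate determinizations can blow up.  However, the analogous \texttt{two\_P1} and \texttt{two\_P2} constructions already carried out in the paper (with $P_1$ and $P_2$ having more complex defining automata than the two-state \texttt{JA}) show that the computation is well within \texttt{Walnut}'s reach, so I do not expect any real difficulty beyond waiting for the computation to finish.
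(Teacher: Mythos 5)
Your proposal is correct and matches the paper's proof essentially verbatim: the paper simply issues the two \texttt{eval} commands (with variables $i,j$ instead of $x,y$) and relies on \texttt{Walnut} returning \texttt{TRUE}. The extra step of constructing the sumset membership automata to confirm sharpness of the thresholds is a reasonable addition but not part of the paper's argument.
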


\begin{proof}
We use the following code:
\begin{verbatim}
eval j0_sum "?msd_nara An (n>=10) => Ei,j n=i+j & JA[i]=@0 & JA[j]=@0":
eval j1_sum "?msd_nara An (n>=2) => Ei,j n=i+j & JA[i]=@1 & JA[j]=@1":
\end{verbatim}
\end{proof}

%Number of $1$'s in prefix of length $N_i$ of ja?
%Satisfies $X_n = 2X_{n-1} - X_{n-2} + X_{n-6}$.

\section{Final words}
\label{finalw}

As mentioned in the introduction, the morphism $\nu$ and
the word $\bf n$ have a lot in
common with other, more well-studied morphisms and
infinite words.  Indeed, the word $\bf n$
can be viewed as the case $k = 3$ of the following locally
catenative definition of a family of sequences:  set $X_{-i}^k = 0$
for $0 \leq i < k$, and define
$X_i^{k} = X_{i-1}^{k} \overline{X_{i-k}^{k}}$ for $i \geq 1$.
It is easy to see that the limiting word ${\bf x}_k$ exists,
and is generated as the image
under $0\rightarrow 0$, $i \rightarrow 1$ for $1 \leq i < k$,
$0' \rightarrow 1$, $i' \rightarrow 0$ for $1 \leq i < k$
of the fixed point, starting with $0$,
of the morphism $\xi_k$
\begin{align*}
0 & \rightarrow 01\\
1 & \rightarrow 2\\
2 & \rightarrow 3\\
  & \vdots \\
(k-1) & \rightarrow 0' \\
0' &  \rightarrow 0' 1' \\
1' & \rightarrow 2'\\
2' & \rightarrow 3'\\
  & \vdots \\
(k-1)' & \rightarrow 0 \\
\end{align*}
which also gives the definition of $2k$-state automaton in the numeration
system based on the lengths of $|\xi^i(0)|$ for $i \geq 0$.
Alternatively, the $n$'th bit of  ${\bf x}_k $
is the parity of the number of $1$'s in the representation of $n$
in this numeration system.

The Thue-Morse morphism $\mu$ and 
word $\bf t$ correspond to the case $k=1$. 
The Fibonacci-Thue-Morphism morphism and word \cite{Ferrand:2007,Shallit:2021b,Shallit:2023b} correspond
to the case $k = 2$.

It is well-known that the Thue-Morse morphism is overlap-free, which
is to say that it contains no factors of length $2n+1$ and period $n$.
It is perhaps less well-known, but easy to prove with
{\tt Walnut}, that the Fibonacci-Thue-Morse word contains no factors
of length $2n+2$ and period $n$.  This is easy to prove as follows:
\begin{verbatim}
reg odd1 {0,1} "0*(10*10*)*10*":
def ftm "?msd_fib $odd1(n) & n>=0":
combine FTM ftm=1:
def ftmef "?msd_fib Au,v (u>=i & u<i+m & u+j=v+i) => FTM[u]=FTM[v]":
def ftm_isaper "?msd_fib p>0 & p<=m & $ftmef(i,i+p,m-p)":
eval ftmtest "?msd_fib Ei,n n>=1 & $ftm_isaper(i,2*n+2,n)":
\end{verbatim}
Here {\tt ftmtest} alleges the existence of a factor of length $2n+2$
and period $n$, and {\tt Walnut} returns {\tt FALSE}.  So there are
no such factors.

In this paper we have shown that the Allouche-Johnson word contains
no factors of length $2n+3$ and period $n$.  This suggests the following
conjecture:
\begin{conjecture}
Let $k \geq 1$. 
The infinite word ${\bf x}_k$ has critical exponent $k+1$, which is
attained by the words $0^{k+1}$ and $1^{k+1}$.  It
contains no factor of length $2n+k$
and period $n$, and therefore has asymptotic critical
exponent $2$.
\end{conjecture}

Another conjecture concerns the first difference 
$d_k(n) = \rho_{{\bf x}_k}(n+1) - \rho_{{\bf x}_k}(n)$
of the subword complexity function for ${\bf x}_k$.  For Thue-Morse
it is well-known
\cite{Brlek:1989,deLuca&Varricchio:1989,Avgustinovich:1994}
that $d_1 (n) \in \{ 2,4 \}$ for $n \geq 1$.
For the Fibonacci-Thue-Morse word, we know that
$d_2(n) \in \{ 6,8 \}$ for $n \geq 5$ from \cite{Shallit:2021b}.
For the Allouche-Johnson word, we proved in Theorem~\ref{scja} that
$d_3 (n) \in \{ 10, 12 \}$ for $n \geq 4$. This suggests
the following conjecture:
\begin{conjecture}
The first difference of the subword complexity function,
$\rho_{{\bf x}_k}(n+1)-\rho_{{\bf x}_k}(n)$ 
for ${\bf x}_k$, for $n$ large enough, takes
the values $4k-2$ and $4k$ only.
\end{conjecture}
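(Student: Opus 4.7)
The plan is to proceed in two phases. In the first phase, for each fixed $k \geq 1$, one can verify the statement mechanically in {\tt Walnut}. The underlying numeration system based on the recurrence $N_i^{(k)} = N_{i-1}^{(k)} + N_{i-k}^{(k)}$ is addable because the dominant root of $X^k = X^{k-1}+1$ is a Pisot number (Frougny and co-authors, as cited earlier), and by the parity-of-$1$'s characterization given at the end of Section~\ref{finalw}, ${\bf x}_k$ has a small DFAO in this system. Following the template of Theorem~\ref{scja}, one would construct the equality-of-factors automaton, then the right-special predicate, then a linear representation counting right-special factors of length $n$; since ${\bf x}_k$ is binary, this count equals $d_k(n)$. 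Verifying that the value is $4k-2$ or $4k$ for large $n$ then reduces to subtracting the relevant linear representations from those for the constants $4k-2$ and $4k$ and checking that the minimal representation is zero outside a finite initial segment.

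For a proof uniform in $k$, the plan is to exploit the cyclic structure of $\xi_k$. Only the letters $0$ and $0'$ have two-letter images under $\xi_k$; all other letters are single-symbol shifts. Hence branching in the fixed point is controlled by a single production rule, so a right-special factor of ${\bf x}_k$, pulled back to the $2k$-letter fixed point, should admit a canonical ``desubstitution'' to a shorter nearly-special factor. Iterating and collecting representatives modulo the complementation symmetry (swapping primed and unprimed letters), one expects to obtain a finite set of seed factors whose orbits under $\xi_k$ generate every right-special factor of ${\bf x}_k$. The counts $4k-2 = 2(2k-1)$ and $4k = 2(2k)$ should then arise from whether a seed's orbit lies in the interior of a single $\xi_k$-block or straddles a block boundary, with the leading factor of $2$ coming from the complementation pairing and the factors $2k-1$ or $2k$ counting the distinct letter positions ``visible'' to the projection.

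The main obstacle is making this structural argument rigorous and uniform in $k$. A per-$k$ {\tt Walnut} verification is routine but produces only individual instances; an honest proof for all $k$ requires either an inductive argument in which the $\xi_k$-preimage sends right-special factors of length $n$ to right-special factors of a shorter length with matching counts, or a meta-theorem for parity-of-$1$'s sequences in $k$-bonacci-style numeration systems. As a preliminary step toward identifying the general pattern, I would verify the conjecture for $k = 4$ and $k = 5$ in {\tt Walnut}, inspect the resulting right-special automata, and look for a stable structural template that could be proved by induction on $k$. A secondary obstacle is that {\tt Walnut} computations may become prohibitively large as $k$ grows, so a purely combinatorial proof via the morphism structure seems to be the only realistic path to the full conjecture.
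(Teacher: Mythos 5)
This statement is posed in the paper as a \emph{conjecture}; the author gives no proof, only the evidence of the cases $k=1,2,3$ (Thue--Morse, Fibonacci--Thue--Morse, and the Allouche--Johnson word, the last being Theorem~\ref{scja}). Your proposal likewise does not prove it: phase one reproduces, per fixed $k$, exactly the paper's methodology for $k\leq 3$, and phase two is an explicitly unfinished heuristic. So the honest assessment is that you have a research plan, not a proof, and you say as much yourself. That is not a fault relative to the paper, but it should be judged as a sketch with gaps rather than an argument.

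There is, moreover, a concrete error in phase one. You assert that the dominant root of $X^k = X^{k-1}+1$ is a Pisot number for every $k$, so that the Frougny-type addability results apply. This is false for $k\geq 6$: the dominant root of $X^k - X^{k-1}-1$ decreases to $1$ as $k\to\infty$, whereas every Pisot number is at least the plastic number $\approx 1.3247$ (the root of $X^3-X-1$, which is the smallest Pisot number). Already at $k=6$ the dominant root is about $1.285$, so it cannot be Pisot, and the Frougny--Solomyak machinery that underwrites an adder automaton (and hence the whole {\tt Walnut} pipeline) is unavailable. Thus even the ``routine per-$k$ verification'' breaks down beyond $k=5$, which undercuts the suggestion that the only obstacle to the general case is uniformity in $k$. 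The second phase, the desubstitution argument for right-special factors of the fixed point of $\xi_k$, is the right kind of idea for a morphism-based proof, but as written it contains no verifiable step: the claimed provenance of the constants $4k-2$ and $4k$ (interior versus boundary of a $\xi_k$-block, times a factor of $2$ from complementation) is asserted rather than derived, and the key lemma --- that preimages of right-special factors under $\xi_k$ are again right-special with controlled multiplicity --- is exactly what would need to be proved and is not.
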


For more about {\tt Walnut}, see the website\\
\centerline{\url{https://cs.uwaterloo.ca/~shallit/walnut.html} .}

\section*{Acknowledgments}
I thank Pierre Letouzey, Edita Pelantov\'a, Ingrid Vukusic, Robbert Fokkink,
and Jean-Paul Allouche for helpful remarks.

\end{document}